\pdfoutput=1
\documentclass[11pt]{article}
\usepackage[utf8]{inputenc}
\usepackage{geometry}

\usepackage{graphicx}
\graphicspath{ {./images/} }
\usepackage{amsmath} 
\usepackage{amssymb} 
\usepackage{listings}
\usepackage{verbatim}
\usepackage[dvipsnames]{xcolor}
\usepackage{color}
\usepackage[vcentermath]{youngtab}
\usepackage{amsthm}
\usepackage{tikz}
\usepackage{lscape}
\setlength{\parindent}{1em}
\setlength{\parskip}{1em}
\usepackage{hyperref}
\usepackage{authblk}
\usepackage[linesnumbered,lined,commentsnumbered,ruled]{algorithm2e}

\usepackage{my_preamble}

\setlength{\parindent}{0em}
\setlength{\parskip}{1em}
\usepackage[compact]{titlesec}
\titlespacing{\section}{0pt}{1ex}{1ex}
\def\ci{\perp\!\!\!\perp}
\hypersetup{
    colorlinks=true,
    linkcolor=magenta,
    filecolor=magenta,      
    urlcolor=magenta,
    citecolor=magenta
}
\title{Logarithmic Voronoi Cells for Gaussian Models}
\author[1]{Yulia Alexandr}
\author[2]{Serkan Ho\c{s}ten}
\affil[1]{University of California, Berkeley}
\affil[2]{San Francisco State University}
\date{}

\newcommand\mscriptsize[1]{\mbox{\scriptsize\ensuremath{#1}}}
\newcommand\mtiny[1]{\mbox{\tiny\ensuremath{#1}}}

\begin{document}
\maketitle

{\centering\footnotesize DEDICATED TO BERND STURMFELS ON HIS 60TH BIRTHDAY\par}
\begin{abstract}
We extend the theory of logarithmic Voronoi cells to Gaussian statistical models. In general, a logarithmic Voronoi cell at a point on a Gaussian model is a convex set contained in its log-normal spectrahedron. We show that for models of ML degree one and linear covariance models the two sets coincide. In particular, they are equal for both directed and undirected graphical models. We introduce decomposition theory of logarithmic Voronoi cells for the latter family. We also study covariance models, for which logarithmic Voronoi cells are, in general, strictly contained in log-normal spectrahedra. We give an explicit description of logarithmic Voronoi cells for the bivariate correlation model and show that they are semi-algebraic sets. Finally, we state a conjecture that logarithmic Voronoi cells for unrestricted correlation models are not semi-algebraic.  

\end{abstract}
\section{Introduction}\label{intro}
This paper extends the study of logarithmic Voronoi cells, first introduced in \cite{AH20logarithmic} for statistical models of discrete random variables, to Gaussian models. For any point $\Sigma$ on a Gaussian model, its logarithmic Voronoi cell is the fiber of the maximum likelihood estimator. In other words, it is the set of all sample covariance matrices which pick $\Sigma$ as the maximum likelihood estimate. The logarithmic Voronoi cell at $\Sigma$ is a convex set (Proposition \ref{prop:logVoronoi-convex})
and is contained in another convex set, namely, the log-normal spectrahedron (Proposition \ref{prop:logVoronoi-in-log-normal}). Compared to the logarithmic Voronoi cell, the log-normal spectrahedron at $\Sigma$ is a nicer set, and we catalogue instances when the two sets coincide. This includes Gaussian models whose maximum likelihood degree is one (Corollary \ref{cor:ML-degree-one}), such as Gaussian models on directed acyclic graphs (Theorem \ref{thm:directed-mldegree}), and  all linear concentration models (Proposition \ref{prop:concentration}). In the latter case, we prove a decomposition theorem
for the logarithmic Voronoi cells when the model is based on an undirected decomposable graph (Theorem \ref{thm:concentration-decomposition}). Lastly, we study in detail the logarithmic Voronoi cells of the bivariate correlation model. In this case, we explicitly determine the logarithmic Voronoi cells and exhibit that, in general, they are semi-algebraic sets
not equal to log-normal spectrahedra.   Finally, we state Conjecture \ref{conjecture} which claims that the logarithmic Voronoi cells of the elliptope are not semi-algebraic. Our work introduces interesting families of spectrahedra and other convex sets motivated by algebraic statistics that need to be studied further with the tools of real and convex algebraic geometry. 

The code used in our computations throughout this paper is available on GitHub. \footnote{\url{https://github.com/yuliaalexandr/gaussian-log-voronoi}}.

Before getting into details, we illustrate the main themes with an example.

\begin{example}\label{ex:CI-model}

Consider the model $\Theta$ that is given as the intersection of the algebraic variety
$$\left\{\Sigma=(\sigma_{ij}):\sigma_{13}=0,\; \sigma_{12}\sigma_{23}-\sigma_{22}\sigma_{13}=0\right\}
= \left\{\Sigma=(\sigma_{ij}):\sigma_{13}=0,\; \sigma_{12}\sigma_{23}= 0\right\}$$
with the cone $\PD_3$ of positive definite symmetric $3\times 3$ matrices. This is the conditional independence model given by $X_1 \ci X_3$ and $X_1 \ci X_3 \, | \, X_2$, and it is the union of two linear planes of dimension four. We may write
$$\Theta=\left\{\begin{pmatrix}t_1&0&0\\0&t_2&t_3\\0&t_3&t_4\end{pmatrix}\succ 0: t_i\in\RR\right\}\cup \left\{\begin{pmatrix}s_1&s_2&0\\s_2&s_3&0\\0&0&s_4\end{pmatrix}\succ 0: s_i\in\RR\right\}.$$
Let $\Theta_1$ and $\Theta_2$ denote the two components above, respectively. Given a matrix $\Sigma\in\Theta$, the set of sample covariance matrices $S\in\PD_3$ that have $\Sigma$ as their maximum likelihood estimate form the \textit{logarithmic Voronoi cell} at $\Sigma$. The set of all matrices $S\in \PD_3$ that have $\Sigma$ as a critical point while optimizing the log-likelihood function with respect to $S$ over $\Theta$ is the \textit{log-normal spectrahedron} at $\Sigma$. The log-normal spectrahedron at a general matrix $\Sigma\in\Theta_1\setminus \Theta_2$ is two-dimensional, parametrized as
$$\left\{\begin{pmatrix}t_1&x_1&x_2\\x_1&t_2&t_3\\x_2&t_3&t_4\end{pmatrix}\succ 0: x_1,x_2\in\RR\right\}.$$
This spectrahedron is a semi-algebraic set, defined by the two inequalities
$$-x_1^2 + t_1t_2>0\text{ and }
-t_2x_2^2 + 2t_3x_1x_2-t_4x_1^2-t_1t_3^2+t_1t_2t_4>0.$$
Since $\Sigma$ is assumed to be positive definite, for any choice of $t_i$, the log-normal spectrahedron at $\Sigma$ is an ellipse. By symmetry the same is true of any $\Sigma\in\Theta_2\setminus \Theta_1$. For a point $\Sigma=\diag(\sigma_1,\sigma_2,\sigma_3)\in\Theta_1\cap\Theta_2$, the log-normal spectrahedron is three-dimensional, given as
$$\left\{(x,y,z)\in\RR^3:\begin{pmatrix}\sigma_1& x &y \\ x&\sigma_2&0\\y&0&\sigma_3 \end{pmatrix}\succ 0\text{ and }\begin{pmatrix}\sigma_1& 0 &y \\ 0&\sigma_2&z\\y&z&\sigma_3 \end{pmatrix}\succ 0\right\}.$$

The maximum likelihood degree of $\Theta$ is two with one critical point in each linear component. Namely, for a general matrix $S=(s_{ij})\in\PD_3$, the two critical points on the model are $\Sigma_1\in\Theta_1$, given by $t_1=s_{11},t_2=s_{22},t_3=s_{23},t_4=s_{33}$, and $\Sigma_2\in\Theta_2$, given by $s_1=s_{11},s_2=s_{12},s_3=s_{22},s_4=s_{33}$. Now consider a general matrix $\Sigma\in\Theta_1\setminus \Theta_2$. The logarithmic Voronoi cell at $\Sigma$ is a subset of its log-normal ellipse, and it can be written as
\begin{align}\label{logarithmic Voronoi-cell-inside-ellipse}
    \left\{S=\begin{pmatrix}t_1&x_1&x_2\\x_1&t_2&t_3\\x_2&t_3&t_4\end{pmatrix}\succ 0: \ell_n(\Sigma, S)\geq \ell_n(\Sigma',S)\right\}
\end{align}
where $\ell_n$ is the log-likelihood function and $\Sigma'=\begin{pmatrix}t_1&x_1&0\\x_1&t_2&0\\0&0&t_4\end{pmatrix}$. Writing out the inequality in (\ref{logarithmic Voronoi-cell-inside-ellipse}), we find that it is equivalent to
\begin{align}\label{strip-inequality}
-t_3\sqrt{t_1/t_4}\leq x_1\leq t_3\sqrt{t_1/t_4}.
\end{align}
Thus, the logarithmic Voronoi cell at $\Sigma\in\Theta_1\setminus \Theta_2$ is the log-normal ellipse at $\Sigma$ intersected with the strip defined by (\ref{strip-inequality}). In particular, it is a semi-algebraic set. We plot the logarithmic Voronoi cell for $t_1=1,t_2=2,t_3=1,t_4=3$ in Figure \ref{ellipses} (on the left). Similarly, one checks that the logarithmic Voronoi cell at $\Sigma\in\Theta_2\setminus\Theta_1$ is the semi-algebraic set
$$\left\{S=\begin{pmatrix}s_1&s_2&y_1\\s_2&s_3&y_2\\y_1&y_2&s_4\end{pmatrix}\succ 0:-s_2\sqrt{s_4/s_1}<y_2<s_2\sqrt{s_4/s_1}\right\}.$$ We plot the logarithmic Voronoi cell for $s_1=2,s_2=1,s_3=3,s_4=4$ in Figure \ref{ellipses} (on the right). Thus, the logarithmic Voronoi cell at a general point of $\Theta$ is not equal to its log-normal ellipse.

\begin{figure}[ht]
    \centering
    \includegraphics[width=0.4\textwidth]{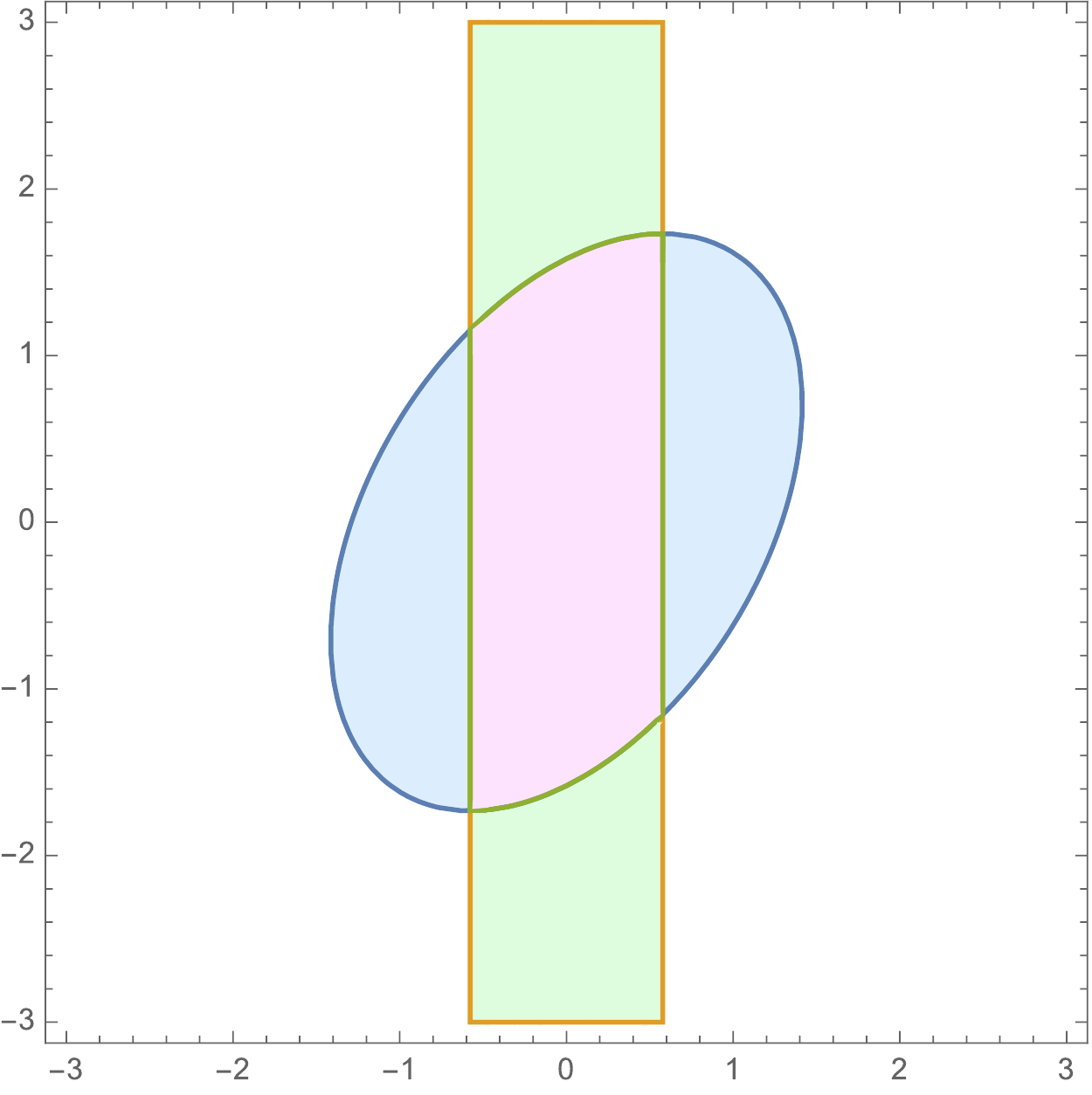} \includegraphics[width=0.4\textwidth]{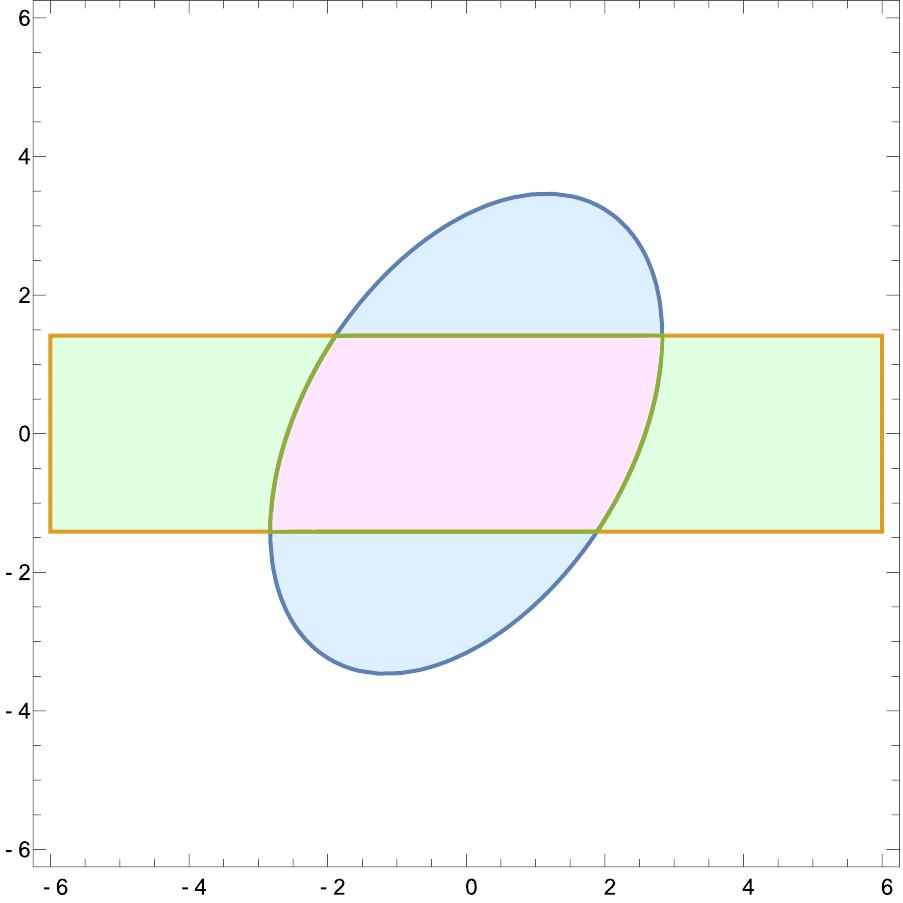} 
    \caption{Logarithmic Voronoi cells (in pink) of the model in Example \ref{ex:CI-model} plotted on the $(x_1,x_2)$-plane and $(y_1,y_2)$-plane, respectively.}
    \label{ellipses}
    
\end{figure}
\end{example}

We remark that despite strict containment of the logarithmic Voronoi cells in log-normal spectrahedra, 
the former is still a semi-algebraic set. This phenomenon is surprising, since the inequality
in (\ref{logarithmic Voronoi-cell-inside-ellipse}) that defines the logarithmic Voronoi cell together with the positive definiteness condition involves the log-likelihood function, which is not a polynomial function. We also note that at the singular points $\Sigma \in \Theta_1 \cap \Theta_2$ the logarithmic Voronoi cells equal the log-normal spectrahedra which are three-dimensional.

\section{Basics of Gaussian models and logarithmic Voronoi cells}

In this section, we give an introduction to Gaussian models and the maximum likelihood estimation problem for them. We define logarithmic Voronoi cells and log-normal spectrahedra
for such models and show that these two are equal when the maximum likelihood degree of the model is one. Our exposition follows \cite[Section 2.1]{DrtonSturmfelsSullivant2009LecturesOnAlgebraicStatistics}. 

Let $X=(\xi_1,\cdots, \xi_m)$ be an $m$-dimensional Gaussian random vector, which has the density function
$$p_{\mu,\Sigma}(x)=\frac{1}{(2\pi)^{m/2}(\det\Sigma)^{1/2}}\exp\left\{-\frac{1}{2}(x-\mu)^T\Sigma^{-1}(x-\mu)\right\},\;\;\; x\in\RR^m$$
with respect to the mean vector $\mu\in\RR^m$ and the covariance matrix $\Sigma\in\PD_m$, where $\PD_m$ is the cone of real symmetric positive definite $m\times m$ matrices. Such $X$ is said to be distributed according to the \textit{Gaussian distribution}, denoted by $\N(\mu, \Sigma)$.
For $\Theta\subseteq \RR^m\times \PD_m$, the statistical model 
$$\mathcal{P}_\Theta=\{\N(\mu, \Sigma): \theta=(\mu,\Sigma)\in\Theta\}$$
is called a \textit{Gaussian model}. Since the parameter space $\Theta$ completely determines the model, we will use $\Theta$ and $\mathcal{P}_\Theta$ interchangeably. For sampled data consisting of $n$ vectors $X^{(1)},\cdots, X^{(n)}\in\RR^m$, we define the sample mean and the sample covariance as
$$\bar{X}=\frac{1}{n}\sum_{i=1}^nX^{(i)}\hspace{1cm}\text{and}\hspace{1cm} S=\frac{1}{n}\sum_{i=1}^n(X^{(i)}-\bar{X})(X^{(i)}-\bar{X})^T,$$
respectively. Throughout this paper we fix a positive integer $n$. Given $n$ sampled data vectors, the log-likelihood function, up to an additive constant, is $$\ell_n(\mu,\Sigma)=-\frac{n}{2}\log\det\Sigma-\frac{n}{2}\tr\left(S\Sigma^{-1}\right)-\frac{n}{2}(\bar{X}-\mu)^T\Sigma^{-1}(\bar{X}-\mu).$$ For a fixed model $\Theta$, the sample
mean $\bar{X}$, and the sample covariance $S$, the \textit{maximum likelihood estimation} is the problem of finding the parameter pair $\hat{\theta}=(\hat{\mu}, \hat{\Sigma})\in \Theta$ at which the log-likelihood function $\ell_n$ is maximized. The \textit{maximum likelihood estimator} is the function $\Phi$ which maps the sample
data $(\bar{X}, S)$ to the maximizer of $\ell_n(\mu, \Sigma)$. For a point $\theta=(\mu,\Sigma)$ in the model, we define its \textit{logarithmic Voronoi cell} $\log\Vor_{\Theta}(\mu,\Sigma)$ to be the set of all $X^{(1)},\cdots, X^{(n)}\in\RR^m$ with sample mean $\bar{X}$ and sample covariance $S$ such that the log-likelihood function $\ell_n$ with respect to this sample is maximized at $\theta$. We will identify each sample $X^{(1)},\cdots, X^{(n)}$ with the tuple $(\bar{X},S)$ and consider any two samples whose sample mean and sample covariance are equal
to be the same. In this paper, we will study logarithmic Voronoi cells at only nonsingular points of Gaussian models. Hence, all our results are on this nondegenerate case, and we will 
not explicitly mention the nonsingularity of these points from now on. 

For any $U\subseteq\RR^m$ and $p\in U $, the \textit{Euclidean Voronoi cell} at $p$ is the set of all points in $\RR^m$ that are closer to $p$ than any other point in $U$ with respect to the Euclidean metric. Euclidean Voronoi cells of varieties were studied in \cite{CRSW2022} and are a topic 
in \textit{metric algebraic geometry} \cite{DiRoccoEklundWeinstein2020BottleneckDegreeOfAlgebraicVarieties, SturmfelsEuclideanDistanceDegree2016, maddiesthesis}. In general, logarithmic Voronoi cells are not equal to Euclidean Voronoi cells. However, it turns out they are the same for the next model.
\begin{prop}
Consider the Gaussian model with parameter space $\Theta=\Theta_1\times \{\text{Id}_m\}$ for some $\Theta_1\subseteq\RR^m$. For any point in this model, its logarithmic Voronoi cell is equal to its Euclidean Voronoi cell.
\end{prop}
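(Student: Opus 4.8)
The plan is to specialize the log-likelihood function to the fixed covariance $\Sigma=\text{Id}_m$ and to recognize it as (a negative, rescaled) squared Euclidean distance. Substituting $\Sigma=\text{Id}_m$ into the formula for $\ell_n$ gives $\log\det\Sigma=0$, $\tr(S\Sigma^{-1})=\tr(S)$, and $(\bar X-\mu)^T\Sigma^{-1}(\bar X-\mu)=\|\bar X-\mu\|^2$, so
$$\ell_n(\mu,\text{Id}_m)\;=\;-\frac{n}{2}\tr(S)\;-\;\frac{n}{2}\,\|\bar X-\mu\|^2.$$
The term $-\frac{n}{2}\tr(S)$ does not involve the parameter, so for a fixed sample — equivalently, a fixed pair $(\bar X,S)$ — the restriction of $(\mu,\text{Id}_m)\mapsto\ell_n(\mu,\text{Id}_m)$ to the model $\Theta=\Theta_1\times\{\text{Id}_m\}$ attains its maximum exactly at those $\mu\in\Theta_1$ that minimize $\|\bar X-\mu\|$, i.e.\ at the points of $\Theta_1$ nearest to $\bar X$ in the Euclidean metric.

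Next I would read this off at the level of Voronoi cells. By the previous paragraph the maximum likelihood estimator sends a sample to a nearest point of $\Theta_1$ to its sample mean $\bar X$, with the sample covariance $S$ playing no role. Hence a sample, identified with $(\bar X,S)$, lies in $\log\Vor_{\Theta}(\mu,\text{Id}_m)$ if and only if $\mu$ realizes $\min_{\mu'\in\Theta_1}\|\bar X-\mu'\|$; dropping the irrelevant coordinate $S$, this is precisely the condition that $\bar X$ lie in the Euclidean Voronoi cell of $\Theta_1$ at $\mu$. Therefore the logarithmic Voronoi cell at $(\mu,\text{Id}_m)$ and the Euclidean Voronoi cell at $\mu$ coincide.

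The argument has no genuinely hard step: it is just the observation that a Gaussian likelihood with identity covariance is, up to an additive constant and a positive scalar, minus a squared distance. The only points that need a word of care are the bookkeeping about ambient spaces — the logarithmic Voronoi cell a priori sits in the sample space $\RR^m\times\PD_m$ of pairs $(\bar X,S)$, whereas the Euclidean Voronoi cell sits in $\RR^m$, so one must note that $S$ drops out — and the boundary convention: with the "nearest point" (respectively "maximizer") convention the two sets agree verbatim, and with the strictly-closer convention they agree away from the measure-zero locus of sample means having more than one nearest point in $\Theta_1$.
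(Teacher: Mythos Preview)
Your proof is correct and follows essentially the same approach as the paper: both reduce the log-likelihood with $\Sigma=\text{Id}_m$ to a squared Euclidean distance from $\bar X$ to $\mu$ (the paper cites this as a known fact, you compute it directly), and then read off the Voronoi cells. Your added remarks on the irrelevant $S$-coordinate and the boundary convention are careful bookkeeping the paper leaves implicit.
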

\begin{proof}
We may identify the parameter space $\Theta$ with the subset of real vectors $\Theta_1\subseteq\RR^m$. For any sample $X^{(1)},\cdots, X^{(n)}$ with sample mean $\bar{X}$, the maximum likelihood estimate is the point in the model $\hat{\mu}\in\Theta_1$ that is closest to $\bar{X}$ in the Euclidean metric \cite[Prop. 2.1.10]{DrtonSturmfelsSullivant2009LecturesOnAlgebraicStatistics}. So, for any point $\mu\in\Theta_1$ in the model, the logarithmic Voronoi cell at $\mu$ is the set of all sample means $\bar{X}\in\RR^m$ that are closer to $\mu$ than any other point in $\RR^m$. This is precisely the Euclidean Voronoi cell at $\mu$.
\end{proof}

\begin{prop}
Let $\Theta=\RR^m\times\PD_m$ be the saturated Gaussian model. For any point in this model, its logarithmic Voronoi cell is the point itself.
\end{prop}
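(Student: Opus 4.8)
The plan is to show that the maximum likelihood estimator $\Phi$ for the saturated model is simply the identity map $(\bar{X},S)\mapsto (\bar{X},S)$; once this is established, the logarithmic Voronoi cell at a point $(\mu,\Sigma)$, being by definition the fiber $\Phi^{-1}(\mu,\Sigma)$ under our identification of samples with pairs $(\bar{X},S)$, is forced to be the single point $\{(\mu,\Sigma)\}$. Throughout I assume, as implicitly required for the estimate to exist, that the sample covariance $S$ ranges over $\PD_m$.

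First I would dispose of the mean. Because $\Sigma^{-1}\succ 0$, the quadratic term $-\tfrac{n}{2}(\bar{X}-\mu)^T\Sigma^{-1}(\bar{X}-\mu)$ appearing in $\ell_n(\mu,\Sigma)$ is at most $0$, with equality if and only if $\mu=\bar{X}$, and this is true for every $\Sigma\in\PD_m$. Hence any maximizer of $\ell_n$ over $\Theta$ must have $\hat\mu=\bar{X}$, and the problem reduces to maximizing
$$g(\Sigma)=-\frac{n}{2}\log\det\Sigma-\frac{n}{2}\tr\!\left(S\Sigma^{-1}\right)$$
over $\Sigma\in\PD_m$. Substituting the concentration matrix $K=\Sigma^{-1}\in\PD_m$, this is equivalent to maximizing $\tfrac{n}{2}\bigl(\log\det K-\tr(SK)\bigr)$ over $K\succ 0$, a strictly concave function (the sum of the strictly concave $\log\det K$ and a linear term), whose unique stationary point is obtained by setting the gradient $\tfrac{n}{2}(K^{-1}-S)$ to zero, i.e. $K=S^{-1}$, equivalently $\hat\Sigma=S$. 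Thus $\ell_n$ attains its maximum over $\Theta$ uniquely at $(\bar{X},S)$, so $\Phi(\bar{X},S)=(\bar{X},S)$; one may alternatively just cite the classical Gaussian MLE computation, cf.\ \cite[Section 2.1]{DrtonSturmfelsSullivant2009LecturesOnAlgebraicStatistics}.

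Finally, for a point $(\mu,\Sigma)\in\Theta$ the logarithmic Voronoi cell $\log\Vor_{\Theta}(\mu,\Sigma)$ consists of exactly those samples $(\bar{X},S)$ with $\Phi(\bar{X},S)=(\mu,\Sigma)$; since $\Phi$ is the identity, this forces $\bar{X}=\mu$ and $S=\Sigma$, so the cell equals $\{(\mu,\Sigma)\}$. There is essentially no real obstacle in this argument: the only points requiring a little care are that $S$ must be positive definite for the estimate to exist (part of our standing convention) and that the maximizer is \emph{unique}, which is exactly what the strict concavity in $K$ provides.
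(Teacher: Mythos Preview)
Your argument is correct and follows exactly the paper's approach: the paper simply cites \cite[Section 2.1]{DrtonSturmfelsSullivant2009LecturesOnAlgebraicStatistics} for the fact that the MLE of $(\bar X,S)$ in the saturated model is $(\bar X,S)$ itself, and then concludes that the logarithmic Voronoi cell is the singleton. You have merely unpacked that citation by proving the MLE computation directly via strict concavity in $K$, which is fine and indeed what the cited reference does.
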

\begin{proof}
For any given sample $(\bar{X}, S)$, its maximum likelihood estimate $(\hat{\mu},\hat{\Sigma})$ is the point $(\bar{X}, S)$ itself \cite[Section 2.1] {DrtonSturmfelsSullivant2009LecturesOnAlgebraicStatistics}. Therefore, for any given point $(\mu, \Sigma)\in\Theta$, its logarithmic Voronoi cell is $\log\Vor_{\Theta}(\mu,\Sigma)=\{(\mu,\Sigma)\}$, as desired.
\end{proof}

Besides the above relatively simple cases, logarithmic Voronoi cells of Gaussian models are fairly complex convex sets. In the rest of the paper we will consider Gaussian models 
given by parameter spaces of the form $\Theta=\RR^m\times \Theta_2$ where $\Theta_2\subseteq \PD_m$. 
It is known that for any sample $(\bar{X},S)$, its maximum likelihood estimate is given by $(\hat{\mu},\hat{\Sigma})$ where $\hat{\mu}=\bar{X}$ and $\hat{\Sigma}$ is the maximizer of the log-likelihood function $\ell_n$ in the set $\Theta_2$.  In this case, we will refer to a Gaussian model as parametrized by $\Theta \subseteq \PD_m$ only, and therefore the log-likelihood function takes the form
$$\ell_n(\Sigma, S)=-\frac{n}{2}\log\det\Sigma-\frac{n}{2}\tr(S\Sigma^{-1}).$$ 
Hence, for any point $\Sigma\in\Theta$, its logarithmic Voronoi cell $\log \Vor_{\Theta}(\Sigma)$ is the set of all matrices $S\in \PD_m$ such that $\Sigma$ is a maximizer of $\ell_n(\Sigma, S)$, viewed as a function of $\Sigma$. 

\begin{prop} \label{prop:logVoronoi-convex} For a Gaussian model $\Theta \subseteq \PD_m$ and $\Sigma \in \Theta$, the logarithmic Voronoi cell $\log \Vor_{\Theta}{\Sigma}$ is a convex set. \end{prop}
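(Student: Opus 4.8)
The plan is to realize $\log\Vor_{\Theta}(\Sigma)$ as an intersection of convex sets. By definition, a matrix $S\in\PD_m$ lies in $\log\Vor_{\Theta}(\Sigma)$ if and only if $\Sigma$ is a global maximizer of $\Sigma'\mapsto\ell_n(\Sigma',S)$ over $\Theta$, that is, $\ell_n(\Sigma,S)\ge\ell_n(\Sigma',S)$ for every $\Sigma'\in\Theta$. So I would first fix an arbitrary $\Sigma'\in\Theta$ and examine the single inequality $\ell_n(\Sigma,S)\ge\ell_n(\Sigma',S)$ as a condition on $S$.

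The one observation that makes everything work is that, even though $\ell_n(\,\cdot\,,S)$ is not a polynomial in its first argument, the difference $\ell_n(\Sigma,S)-\ell_n(\Sigma',S)$ is an affine-linear function of $S$: expanding the definition,
\[
\ell_n(\Sigma,S)-\ell_n(\Sigma',S)=-\frac{n}{2}\bigl(\log\det\Sigma-\log\det\Sigma'\bigr)-\frac{n}{2}\,\tr\!\bigl(S(\Sigma^{-1}-\Sigma'^{-1})\bigr),
\]
where the $\log\det$ terms are constants independent of $S$, and $S\mapsto\tr(SM)$ is linear for the fixed symmetric matrix $M=\Sigma^{-1}-\Sigma'^{-1}$. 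Consequently the locus $H_{\Sigma'}:=\{S:\ell_n(\Sigma,S)\ge\ell_n(\Sigma',S)\}$ is a closed affine half-space in the space of real symmetric $m\times m$ matrices (or the whole space, in the degenerate case $\Sigma'=\Sigma$), hence convex.

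It then remains to assemble the pieces. We have $\log\Vor_{\Theta}(\Sigma)=\PD_m\cap\bigcap_{\Sigma'\in\Theta}H_{\Sigma'}$; since $\PD_m$ is convex, each $H_{\Sigma'}$ is convex, and an arbitrary intersection of convex sets is convex, the logarithmic Voronoi cell is convex. I do not expect a genuine obstacle in this argument: it is short once one notices that the nonlinearity of $\ell_n$ lives entirely in the $S$-free term $\log\det\Sigma$, so that each defining inequality is linear in the sample covariance $S$. The only point to state carefully is that the intersection ranges over all of $\Theta$, not merely a neighborhood of $\Sigma$, which is exactly what the requirement that $\Sigma$ be a \emph{global} maximizer provides.
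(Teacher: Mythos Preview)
Your proof is correct and follows essentially the same approach as the paper: both observe that $\ell_n(\Sigma,S)$ is affine-linear in $S$, so each inequality $\ell_n(\Sigma,S)\ge\ell_n(\Sigma',S)$ cuts out a closed half-space, and the logarithmic Voronoi cell is then the intersection of these half-spaces with the convex cone $\PD_m$. Your version is slightly more explicit in writing out the difference and noting the degenerate case $\Sigma'=\Sigma$, but the argument is the same.
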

\begin{proof}
The logarithmic Voronoi cell at $\Sigma$ is 
$$ \log\Vor_{\Theta}(\Sigma) \, = \, \{S \in \PD_m \, : \, \ell_n(\Sigma, S) \geq \ell_n(\Sigma', S) \mbox{ for all } \Sigma' \in \Theta \}.$$
Since $\ell_n(\Sigma, S)$ is linear in $S$, each inequality $\ell_n(\Sigma,S) \geq \ell_n(\Sigma',S)$ defines a closed halfspace. Therefore the logarithmic Voronoi cell at $\Sigma$
is the intersection of these halfspaces for each $\Sigma' \in \Theta$ and the convex
cone $\PD_m$.
\end{proof}

Now we introduce several definitions, which generalize the concepts introduced in \cite{AH20logarithmic} to Gaussian distributions. For a point $\Sigma\in \Theta$, we define the \textit{log-normal matrix space} at $\Sigma$, denoted by $\N_\Sigma\Theta$, to be the set of all symmetric $m\times m$ matrices $S$ such that $\Sigma$ appears as a critical point when optimizing $\ell_n(\Sigma, S)$. This is the set of all points such that the gradient $\nabla\ell_n(\Sigma,S)$ with respect to $\Sigma$ lies in the normal space of the model $\Theta$ at $\Sigma$. This condition is linear in $S$, so the log-normal matrix space is an affine linear space. Intersecting it with $\PD_m$, we obtain a spectrahedron $\mathcal{K}_{\Theta}(\Sigma) = \PD_m\cap \;\N_\Sigma\Theta$, which we call the \textit{log-normal spectrahedron} at $\Sigma$. We immediately obtain the following.
\begin{prop} \label{prop:logVoronoi-in-log-normal}
Each logarithmic Voronoi cell $\log\Vor_{\Theta}(\Sigma)$ is contained in the log-normal 
spectrahedron $\mathcal{K}_{\Theta}(\Sigma)$. In particular, 
\begin{align} \label{eq:difference}
\log\Vor_{\Theta}{\Sigma} = \{ S \in \mathcal{K}_{\Theta}{\Sigma}\, : \, \ell_n(\Sigma, S) \geq \ell_n(\Sigma', S) \mbox{ for all critical points } \Sigma'\}.
\end{align}
\end{prop}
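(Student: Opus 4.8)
The plan is to unwind the definitions and invoke the first-order necessary conditions for a constrained maximum. Fix $S \in \log\Vor_{\Theta}(\Sigma)$, so that by definition $\Sigma$ is a global maximizer of the map $\Sigma' \mapsto \ell_n(\Sigma', S)$ over $\Theta$, and $S \in \PD_m$. Since $\Sigma$ is a nonsingular point of $\Theta$, in a neighborhood of $\Sigma$ the model is a smooth submanifold of the space of symmetric $m\times m$ matrices, and $\Sigma$ lies in the open cone $\PD_m$; hence $\Sigma$ is in particular a local maximizer of the restriction of $\ell_n(\cdot, S)$ to this manifold. The first-order (Lagrange) optimality condition then says that the gradient $\nabla_{\Sigma}\ell_n(\Sigma, S)$ is orthogonal to the tangent space $T_{\Sigma}\Theta$, i.e.\ it lies in the normal space of $\Theta$ at $\Sigma$.

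By construction this is exactly the defining condition for $S$ to belong to the log-normal matrix space $\N_{\Sigma}\Theta$: the gradient $\nabla_{\Sigma}\ell_n(\Sigma, S) = \tfrac{n}{2}\Sigma^{-1}(S-\Sigma)\Sigma^{-1}$ is affine-linear in $S$, so the requirement ``gradient in the normal space'' cuts out an affine-linear set of matrices $S$, which is precisely $\N_{\Sigma}\Theta$. Intersecting with the condition $S \in \PD_m$ gives $S \in \PD_m \cap \N_{\Sigma}\Theta = \mathcal{K}_{\Theta}(\Sigma)$. This establishes the containment $\log\Vor_{\Theta}(\Sigma) \subseteq \mathcal{K}_{\Theta}(\Sigma)$.

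For the refinement \eqref{eq:difference}, write $R$ for the set on the right-hand side. The inclusion $\log\Vor_{\Theta}(\Sigma) \subseteq R$ is immediate from the containment just proved together with the fact that a global maximizer dominates the value of $\ell_n(\cdot, S)$ at every point of $\Theta$, in particular at every critical point. For the reverse inclusion, take $S \in R$. Running the definitional equivalence of the previous paragraph backwards, $S \in \mathcal{K}_{\Theta}(\Sigma)$ means precisely that $\Sigma$ is a critical point of $\ell_n(\cdot, S)$ on $\Theta$, and by hypothesis $\ell_n(\Sigma, S) \ge \ell_n(\Sigma', S)$ for every critical point $\Sigma'$. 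Since any maximizer of $\ell_n(\cdot, S)$ over $\Theta$ is itself a critical point (first-order conditions again, using nonsingularity and that a maximizer lies in the relative interior), the maximum value, when attained, is attained at some critical point and hence is $\le \ell_n(\Sigma, S)$; therefore $\Sigma$ attains it, that is, $S \in \log\Vor_{\Theta}(\Sigma)$.

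The step that needs the most care is the appeal to first-order optimality on the model. One must know that at a nonsingular $\Sigma$ the model is locally a smooth manifold, so that ``local maximum $\Rightarrow$ gradient in the normal space'' is legitimate, and --- for the reverse inclusion in \eqref{eq:difference} --- that a maximizer of $\ell_n(\cdot, S)$ over $\Theta$ does not escape to the boundary of $\PD_m$ or to infinity but occurs at an interior critical point; here one uses that $\ell_n(\Sigma', S) \to -\infty$ as $\Sigma'$ leaves every compact subset of $\PD_m$ when $S \succ 0$, together with the standing assumption that we work in the nondegenerate regime where the maximum likelihood estimate exists. Everything else is bookkeeping with the definitions and the linearity of $\ell_n$ in $S$ already noted above.
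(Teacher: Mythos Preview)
Your argument is correct and follows exactly the reasoning the paper has in mind: the paper states this proposition without proof, prefacing it only with ``We immediately obtain the following,'' since the containment is a direct consequence of the first-order optimality condition built into the definition of $\N_\Sigma\Theta$, and the refinement \eqref{eq:difference} is then just the observation that comparing against all of $\Theta$ is the same as comparing against the critical points. Your write-up simply makes this explicit, including the honest caveats about nonsingularity and attainment of the maximum that the paper sweeps under its standing nondegeneracy assumption.
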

The reverse of the containment above does not hold in general, as we have seen in Example \ref{ex:CI-model}. This is typical, and we will see more instances of this phenomenon.  
However, the two convex sets are equal if the log-likelihood function has a unique optimum on the model $\Theta$, and more strongly, if the maximum likelihood degree of the Gaussian model is one.

\begin{cor} \label{cor:unique-maximizer}
If $\ell_n(\Sigma,S)$ has a unique maximum $\Sigma$ over the Gaussian model $\Theta \subseteq \PD_m$, then $\log\Vor_{\Theta}(\Sigma) = \mathcal{K}_{\Theta}(\Sigma)$.
\end{cor}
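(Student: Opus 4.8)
The plan is to prove the two inclusions separately. The inclusion $\log\Vor_\Theta(\Sigma)\subseteq\mathcal{K}_\Theta(\Sigma)$ is already supplied by Proposition~\ref{prop:logVoronoi-in-log-normal}, so all of the work lies in the reverse inclusion $\mathcal{K}_\Theta(\Sigma)\subseteq\log\Vor_\Theta(\Sigma)$, and I would read this off the reformulation~(\ref{eq:difference}): a matrix $S\in\mathcal{K}_\Theta(\Sigma)$ lies in $\log\Vor_\Theta(\Sigma)$ as soon as $\ell_n(\Sigma,S)\geq\ell_n(\Sigma',S)$ for every critical point $\Sigma'$ of $\ell_n(\cdot,S)$ on $\Theta$, equivalently as soon as $\Sigma$ is itself a global maximizer of $\ell_n(\cdot,S)$ over $\Theta$.

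Next I would fix $S\in\mathcal{K}_\Theta(\Sigma)$. By definition of the log-normal spectrahedron, $\Sigma$ is a critical point of the map $\Sigma'\mapsto\ell_n(\Sigma',S)$ on $\Theta$. The key observation is that this map is proper (coercive) on $\PD_m$ because $S\succ 0$: writing $\ell_n(\Sigma',S)=-\tfrac{n}{2}\log\det\Sigma'-\tfrac{n}{2}\tr(S\Sigma'^{-1})$, the value tends to $-\infty$ both as $\Sigma'$ approaches the boundary of $\PD_m$ (there $\tr(S\Sigma'^{-1})\geq\lambda_{\min}(S)\,\tr(\Sigma'^{-1})$ blows up faster than $-\log\det\Sigma'$) and as $\|\Sigma'\|\to\infty$ (there $-\log\det\Sigma'\to-\infty$ while $-\tr(S\Sigma'^{-1})\leq 0$). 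Hence $\ell_n(\cdot,S)$ attains a global maximum over $\Theta$, and any such maximizer is a critical point. The hypothesis that $\ell_n(\cdot,S)$ has a unique maximum $\Sigma$ over $\Theta$ then identifies this maximizer with $\Sigma$, so $\ell_n(\Sigma,S)\geq\ell_n(\Sigma',S)$ for every critical point $\Sigma'$. By~(\ref{eq:difference}) this yields $S\in\log\Vor_\Theta(\Sigma)$, and the two sets coincide.

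The step I expect to require the most care is the properness claim: one must check that $\ell_n(\cdot,S)$ escapes to $-\infty$ along \emph{every} sequence leaving the compact subsets of $\Theta$, including mixed situations where some eigenvalues of $\Sigma'$ degenerate to $0$ while others blow up, and — when $\Theta$ is not closed in $\PD_m$ — that its relative frontier creates no spurious suprema. For the Gaussian models considered here this coercivity holds, and it is precisely what upgrades the standing hypothesis into the statement that a critical point of $\ell_n(\cdot,S)$ is automatically the global maximizer, which is the heart of the argument.
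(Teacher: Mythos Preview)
Your argument is correct and rests on the same reformulation~(\ref{eq:difference}) as the paper, but the coercivity discussion is an unnecessary detour. The hypothesis already asserts that $\Sigma$ \emph{is} the (unique) maximum of $\ell_n(\cdot,S)$ over $\Theta$; once that is granted, $\ell_n(\Sigma,S)\geq\ell_n(\Sigma',S)$ holds for every $\Sigma'\in\Theta$ by definition of a maximum, so the inequalities in~(\ref{eq:difference}) are automatically satisfied and the paper's proof is a single sentence. Your properness argument would matter only if the hypothesis merely said that any maximizer, \emph{if one exists}, is $\Sigma$---but existence is handed to you, so the concerns about mixed eigenvalue degenerations and the relative frontier of $\Theta$ never enter.
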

\begin{proof}
Since $\Sigma$ is the unique maximum the inequalities in~(\ref{eq:difference}) are superfluous.
\end{proof}

\begin{definition}
The \textit{maximum likelihood degree} of a Gaussian model $\Theta \subseteq \PD_m$ is the number 
of nonsingular complex critical points of $\ell_n(\Sigma, S)$ for generic $S$ on the Zariski closure of $\Theta$ in the space of complex symmetric matrices.
\end{definition}

\begin{cor} \label{cor:ML-degree-one}
If the ML degree of a Gaussian model $\Theta \subseteq \PD_m$ is one then 
$\log\Vor_{\Theta}(\Sigma) = \mathcal{K}_{\Theta}(\Sigma)$ for every $\Sigma \in \Theta$.
\end{cor}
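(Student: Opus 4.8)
The plan is to prove the two set inclusions separately. One of them, $\log\Vor_\Theta(\Sigma) \subseteq \mathcal{K}_\Theta(\Sigma)$, is exactly Proposition \ref{prop:logVoronoi-in-log-normal}, so the whole point is the reverse inclusion. For this I would use the reformulation (\ref{eq:difference}): it suffices to prove that for \emph{every} $S$ in the log-normal spectrahedron $\mathcal{K}_\Theta(\Sigma)$, the point $\Sigma$ is the unique critical point of $\ell_n(\cdot, S)$ on $\Theta$ (hence, being the only critical point, also its unique maximizer), so that the inequalities $\ell_n(\Sigma, S) \geq \ell_n(\Sigma', S)$ in (\ref{eq:difference}) are vacuous and Corollary \ref{cor:unique-maximizer} closes the argument.

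To extract ``unique critical point'' from the ML degree hypothesis I would introduce the likelihood correspondence $\mathcal{L} \subseteq \overline{\Theta} \times \mathrm{Sym}_m(\mathbb{C})$, the Zariski closure of the set of pairs $(\Sigma', S)$ with $\Sigma'$ a nonsingular critical point of $\ell_n(\cdot, S)$. Two structural facts are in play. First, the fiber of the projection $\pi_1 \colon \mathcal{L} \to \overline{\Theta}$ over $\Sigma$ is $\{\Sigma\} \times \N_\Sigma\Theta$ (the complexified log-normal matrix space), since $\N_\Sigma\Theta$ is cut out precisely by the linear critical-point condition at $\Sigma$; this also gives $\dim \mathcal{L} = \binom{m+1}{2}$. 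Second, the projection $\pi_2 \colon \mathcal{L} \to \mathrm{Sym}_m(\mathbb{C})$ is dominant with generic fiber of cardinality equal to the ML degree, so the hypothesis forces $\pi_2$ to be birational. Combining these, over a generic point $S$ of the affine space $\N_\Sigma\Theta$ the only critical point is $\Sigma$ itself: it is a critical point by the first fact, and it is the only one by the second. Hence the set of $S \in \mathcal{K}_\Theta(\Sigma) = \N_\Sigma\Theta \cap \PD_m$ for which $\Sigma$ is the unique critical point is dense in $\mathcal{K}_\Theta(\Sigma)$, and every such $S$ lies in $\log\Vor_\Theta(\Sigma)$ by (\ref{eq:difference}).

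It then remains to upgrade ``dense'' to ``all''. Here I would observe that $\mathcal{K}_\Theta(\Sigma)$ is closed in $\PD_m$, and that $\log\Vor_\Theta(\Sigma)$ is also closed in $\PD_m$, being the intersection of $\PD_m$ with the closed half-spaces $\{S : \ell_n(\Sigma, S) \geq \ell_n(\Sigma', S)\}$ over $\Sigma' \in \Theta$. A subset of $\mathcal{K}_\Theta(\Sigma)$ that is both dense in it and relatively closed equals it, which yields $\log\Vor_\Theta(\Sigma) = \mathcal{K}_\Theta(\Sigma)$.

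The main obstacle is exactly the step ``generic point of $\N_\Sigma\Theta$'' versus ``generic point of $\mathrm{Sym}_m(\mathbb{C})$'': birationality of $\pi_2$ only guarantees a single critical point over points \emph{outside} a proper subvariety of $\mathrm{Sym}_m(\mathbb{C})$ (its branch and indeterminacy locus), and the affine slice $\N_\Sigma\Theta$ could a priori be contained in that subvariety, in which case a generic $S \in \mathcal{K}_\Theta(\Sigma)$ might carry a second critical point $\Sigma''$. Ruling this out is where I expect to spend most of the effort: one route is a careful dimension-and-irreducibility analysis of $\mathcal{L}$ and of $\pi_2^{-1}(\N_\Sigma\Theta)$, showing that $\{\Sigma\} \times \N_\Sigma\Theta$ is the only component dominating $\N_\Sigma\Theta$; another is to invoke the known structure of ML-degree-one Gaussian models, for which the maximum likelihood estimator is a single rational matrix-valued function of $S$. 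I would also record the routine but necessary point that the maximizer exists over the relevant set, so that ``critical point'' and ``maximizer'' line up as used in the statement and in (\ref{eq:difference}).
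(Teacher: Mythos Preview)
Your approach is correct but far more elaborate than the paper's. The paper dispatches the corollary in one sentence: ``Since $\Sigma$ is the unique critical point, the result follows from Corollary~\ref{cor:unique-maximizer}.'' In other words, the paper simply reads ``ML degree one'' as ``for every $S\in\mathcal{K}_\Theta(\Sigma)$ the point $\Sigma$ is the only critical point'', and then the inequalities in~(\ref{eq:difference}) are vacuous. You have correctly noticed that the ML degree is a \emph{generic} count and have set out to justify this identification rigorously via the likelihood correspondence, birationality of $\pi_2$, and a density--plus--closure argument. The obstacle you flag (that $\N_\Sigma\Theta$ might sit inside the exceptional locus of $\pi_2$) is genuine and is exactly what the paper sweeps under the rug; a quick way to dispose of it is to observe that $\Sigma\in\N_\Sigma\Theta$ and that at $S=\Sigma$ the Hessian of $\ell_n(\cdot,\Sigma)$ is negative definite on $T_\Sigma\Theta$, so $(\Sigma,\Sigma)$ is a smooth point of $\mathcal{L}$ at which $\pi_2$ is a local isomorphism, whence $\N_\Sigma\Theta$ is not contained in the branch or indeterminacy locus. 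What your route buys is an honest proof of the step the paper takes on faith; what the paper's route buys is brevity.
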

\begin{proof}
Since $\Sigma$ is the unique critical point, the result follows from Corollary \ref{cor:unique-maximizer}.
\end{proof}

\section{Linear concentration and undirected graphical models}
In a multivariate Gaussian distribution, the inverse of the covariance matrix $K = \Sigma^{-1}$
is known as the \textit{concentration matrix}. Linear concentration models \cite{anderson1970} are  given by concentration matrices which form a linear subspace. Let $\mathcal{L}$ be a 
$d$-dimensional linear
subspace of $m \times m$ real symmetric matrices. Then a \textit{linear concentration model} is given by
$$\Theta \, = \, \{ \Sigma \in \PD_m \, : \, K = \Sigma^{-1} \in \mathcal{L}\}. $$
The log-likelihood function  equals
$$ \frac{n}{2}\log\det K -\frac{n}{2}\tr(SK), $$ 
and it is a strictly concave function on $\mathcal{L} \cap \PD_m$. If $K_1, \ldots, K_d$ are
a basis of $\mathcal{L}$ and $S$ is a sample covariance matrix, the maximizer
${\hat \Sigma} = {\hat K}^{-1}$ of the log-likelihood function is the unique solution 
to 
\begin{align} \label{eq:concentration}
\tr( {\hat \Sigma}K_j) \, = \, \tr(SK_j), \quad j=1,\ldots, d.
\end{align}
This follows from writing $K = \sum_{i=1}^d \lambda_j K_j$ and taking partial derivatives of the log-likelihood function with respect to $\lambda_j$, $j=1, \ldots, d$; see \cite{SU10}. Therefore, we immediately get the following.
\begin{prop} \label{prop:concentration}
Let $\Theta$ be a linear concentration model given by $\mathcal{L} = \mathrm{span}\{K_1, \ldots, K_d\}$, and let $\Sigma \in \Theta$. Then
$$ \log\Vor_{\Theta}(\Sigma) = \mathcal{K}_{\Theta}(\Sigma) = \{ S \in \PD_m \, : \, \tr(SK_j) = \tr(\Sigma K_j), \,\, j=1, \ldots, d\}.$$
\end{prop}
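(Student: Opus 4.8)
The plan is to recognize the proposition as essentially a repackaging of the stationarity equations~(\ref{eq:concentration}), together with the uniqueness of the optimum built into Corollary~\ref{cor:unique-maximizer}. Concretely, two things have to be checked: that $\log\Vor_{\Theta}(\Sigma)=\mathcal{K}_{\Theta}(\Sigma)$, and that $\mathcal{K}_{\Theta}(\Sigma)$ equals the affine slice $\{S\in\PD_m:\tr(SK_j)=\tr(\Sigma K_j),\ j=1,\dots,d\}$ of the cone. For the first, I would pass to the concentration coordinate $K=\Sigma^{-1}$, in which $\ell_n$ becomes $\frac{n}{2}\log\det K-\frac{n}{2}\tr(SK)$, a strictly concave function on the open convex set $\mathcal{L}\cap\PD_m$. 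Hence for each fixed $S$ this function has at most one critical point on $\mathcal{L}\cap\PD_m$, and any critical point is automatically its global maximum. So if $S\in\mathcal{K}_{\Theta}(\Sigma)$, then $\Sigma$, being a critical point, is the unique maximizer of $\ell_n(\,\cdot\,,S)$ over $\Theta$, whence $S\in\log\Vor_{\Theta}(\Sigma)$; combined with the reverse inclusion of Proposition~\ref{prop:logVoronoi-in-log-normal} (equivalently, by invoking Corollary~\ref{cor:unique-maximizer}) this gives the first equality for every $\Sigma\in\Theta$.

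For the second equality I would compute the log-normal matrix space $\N_\Sigma\Theta$ directly. Since $K\mapsto K^{-1}$ is a diffeomorphism from $\mathcal{L}\cap\PD_m$ onto $\Theta$ and $\mathcal{L}\cap\PD_m$ is open in $\mathcal{L}$, the matrix $\Sigma$ is a critical point of $\ell_n(\,\cdot\,,S)$ restricted to $\Theta$ exactly when every partial derivative of $g(\lambda)=\frac{n}{2}\log\det\!\big(\sum_j\lambda_j K_j\big)-\frac{n}{2}\tr\!\big(S\sum_j\lambda_j K_j\big)$ vanishes at the coordinate vector of $K=\Sigma^{-1}$. Since $\partial g/\partial\lambda_j=\frac{n}{2}\tr(K^{-1}K_j)-\frac{n}{2}\tr(SK_j)=\frac{n}{2}\big(\tr(\Sigma K_j)-\tr(SK_j)\big)$, these vanishing conditions are precisely the $d$ linear equations $\tr(SK_j)=\tr(\Sigma K_j)$; so $\N_\Sigma\Theta$ is the affine space they cut out, and intersecting with $\PD_m$ gives the claimed description of $\mathcal{K}_{\Theta}(\Sigma)$. (Alternatively one can differentiate $\ell_n$ in $\Sigma$ directly, obtaining the gradient $\tfrac{n}{2}\Sigma^{-1}(S-\Sigma)\Sigma^{-1}$, and impose that it be normal to the tangent space $\Sigma\mathcal{L}\Sigma$ of $\Theta$ at $\Sigma$, arriving at the same equations.)

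The hard part will not be a computation — strict concavity and the stationarity equations are both handed to us — but the small amount of bookkeeping in the previous paragraph: verifying that the definition of $\N_\Sigma\Theta$ through the gradient of $\ell_n$ in $\Sigma$ lying in the normal space of $\Theta$ really does coincide with the vanishing of the $\lambda_j$-partials of $g$. This is just the chain rule plus the observation that $\mathcal{L}\cap\PD_m$ is open in $\mathcal{L}$, so there is no boundary stationarity to worry about; but it is the one place where one must be precise about what \emph{critical point} means here.
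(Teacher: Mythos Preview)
Your proposal is correct and follows the same route as the paper: strict concavity of $\ell_n$ in the concentration coordinate gives a unique maximizer, so Corollary~\ref{cor:unique-maximizer} yields the first equality, and the stationarity equations~(\ref{eq:concentration}) give the affine description of $\mathcal{K}_{\Theta}(\Sigma)$. The paper's proof is just a two-sentence pointer to these same two facts; your write-up simply spells out the partial-derivative computation and the chain-rule bookkeeping more explicitly.
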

\begin{proof}
The equality of the logarithmic Voronoi cell and the log-normal spectrahedron follows from
Corollary \ref{cor:unique-maximizer}. The linear description of these convex sets follows
from (\ref{eq:concentration}).
\end{proof}

\begin{cor} \label{cor:concentration-dim-one}
Let $\Theta$ be a one-dimensional linear concentration model spanned by $K \in \PD_m$.
For $\lambda >0$ and $\Sigma = \frac{1}{\lambda} K^{-1}$, the logarithmic Voronoi cell at $\Sigma$
is $\log\Vor_{\Theta}(\Sigma) = \{ S \in \PD_m \, : \, \tr(SK) = \frac{m}{\lambda} \}$. Therefore, it is the intersection of a translate of $\mathcal{L}^\perp$ with $\PD_m$ where 
$\mathcal{L} = \mathrm{span}(K)$.
\end{cor}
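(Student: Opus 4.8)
The plan is to specialize Proposition \ref{prop:concentration} to the case $d=1$ and then carry out a one-line trace computation. First I would observe that since $K \succ 0$, any $\Sigma \in \Theta$ has $\Sigma^{-1} = cK$ with $cK \succ 0$, forcing $c = \lambda > 0$; hence every matrix in $\Theta$ has the stated form $\Sigma = \frac{1}{\lambda}K^{-1}$, the subspace $\mathcal{L} = \mathrm{span}(K)$ is genuinely one-dimensional, and $K_1 = K$ is a basis of $\mathcal{L}$. Applying Proposition \ref{prop:concentration} then gives directly
$$\log\Vor_{\Theta}(\Sigma) = \mathcal{K}_{\Theta}(\Sigma) = \{S \in \PD_m : \tr(SK) = \tr(\Sigma K)\}.$$

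Next I would evaluate the constant on the right: $\tr(\Sigma K) = \tr\!\big(\tfrac{1}{\lambda}K^{-1}K\big) = \tfrac{1}{\lambda}\tr(\mathrm{Id}_m) = \tfrac{m}{\lambda}$, which yields the asserted description $\log\Vor_{\Theta}(\Sigma) = \{S \in \PD_m : \tr(SK) = \tfrac{m}{\lambda}\}$. Finally, recalling that $\mathcal{L}^\perp$ is taken with respect to the trace inner product $\langle A,B\rangle = \tr(AB)$, we have $\mathcal{L}^\perp = \{S : \tr(SK) = 0\}$, so the set $\{S : \tr(SK) = \tfrac{m}{\lambda}\}$ is precisely the translate of $\mathcal{L}^\perp$ passing through $\Sigma$ (as $\tr(\Sigma K) = \tfrac{m}{\lambda}$), and intersecting with $\PD_m$ gives the logarithmic Voronoi cell, completing the proof.

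There is no substantive obstacle here: the statement is an immediate corollary of Proposition \ref{prop:concentration}. The only points requiring a small amount of care are the bookkeeping that the scalar parameter $\lambda$ ranges over the positive reals (so that the model is a genuine one-parameter family in $\PD_m$) and that the pairing defining $\mathcal{L}^\perp$ is the trace form, which is the one for which equation~(\ref{eq:concentration}) is stated.
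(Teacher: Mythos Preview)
Your proof is correct and follows exactly the same approach as the paper: apply Proposition~\ref{prop:concentration} with $K_1=K$ and compute $\tr(\Sigma K)=\tfrac{1}{\lambda}\tr(I_m)=\tfrac{m}{\lambda}$. The paper's proof is a one-liner that omits the bookkeeping you spell out, but the content is identical.
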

\begin{proof}
Since $\tr(\Sigma K) = \frac{m}{\lambda}$, the result follows Proposition \ref{prop:concentration}.
\end{proof}
\begin{cor} \label{cor:concentration-m=2}
When $m=2$, the logarithmic Voronoi cells of one-dimensional concentration models are convex regions defined by ellipses.
\end{cor}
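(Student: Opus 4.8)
The plan is to make the concrete description from Corollary~\ref{cor:concentration-dim-one} fully explicit and then recognize it as an affine image of a disk. Fix $K \in \PD_2$ spanning $\mathcal{L}$, take $\lambda > 0$, let $\Sigma = \tfrac{1}{\lambda}K^{-1}$, and set $c = 2/\lambda > 0$. By Corollary~\ref{cor:concentration-dim-one},
\[
\log\Vor_\Theta(\Sigma) \;=\; \{\, S \in \PD_2 \,:\, \tr(SK) = c \,\},
\]
which is the intersection of the open cone $\PD_2$, living in the three-dimensional space of symmetric $2\times 2$ matrices, with one affine plane; in particular the cell is two-dimensional, so it makes sense to ask whether it is (the region bounded by) an ellipse.

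First I would diagonalize the constraint using the positive definite square root $K^{1/2}$. The congruence $S \mapsto T = K^{1/2} S K^{1/2}$ is an invertible linear map on $\mathrm{Sym}_2(\RR)$ that restricts to a bijection $\PD_2 \to \PD_2$, and by cyclicity of the trace $\tr(SK) = \tr\!\big(K^{-1/2} T K^{-1/2} K\big) = \tr(T)$. Hence $\log\Vor_\Theta(\Sigma)$ is the image, under the invertible linear map $T \mapsto K^{-1/2} T K^{-1/2}$, of the set $\{\, T \in \PD_2 \,:\, \tr(T) = c \,\}$. Next I would compute this set directly: writing $T = \left(\begin{smallmatrix} a & b \\ b & c - a\end{smallmatrix}\right)$ to enforce the trace condition, positive definiteness becomes $a > 0$, $c - a > 0$, and $a(c-a) - b^2 > 0$, and completing the square turns the last inequality into $\big(a - \tfrac{c}{2}\big)^2 + b^2 < \big(\tfrac{c}{2}\big)^2$, which already implies the other two. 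So $\{\, T \in \PD_2 : \tr T = c \,\}$ is precisely the open disk of radius $c/2$ centered at $(a,b) = (c/2,0)$.

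Finally, an invertible linear map sends a disk to the region bounded by an ellipse, so $\log\Vor_\Theta(\Sigma)$ is a convex region defined by an ellipse, for every such model and every $\Sigma$ in it. There is no serious obstacle in this argument; the only points that deserve a word of justification are that the substitution is genuinely linear and invertible on the space of symmetric matrices (so that ``ellipse'' is an affine invariant that is preserved), and that the cell is two-dimensional inside its ambient affine plane, so that one should indeed expect an ellipse rather than a higher-dimensional ellipsoid.
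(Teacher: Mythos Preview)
Your argument is correct and takes a genuinely different route from the paper. The paper substitutes the linear constraint $a s_{11}+2b s_{12}+c s_{22}=\tfrac{2}{\lambda}$ (with $K=\left(\begin{smallmatrix}a&b\\ b&c\end{smallmatrix}\right)$) directly into the positive-definiteness inequalities, obtains an explicit quadric in $(s_{12},s_{22})$, and then checks via the classical conic invariants $\Delta$ and $\delta$ that this quadric is an ellipse. You instead perform the congruence $S\mapsto K^{1/2}SK^{1/2}$ to reduce to $K=I_2$, where the region $\{T\in\PD_2:\tr T=c\}$ is visibly a round disk after completing the square, and then invoke the affine invariance of ellipses to transport back. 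Your approach is cleaner and coordinate-free, avoiding any appeal to invariant theory of plane conics; the paper's approach, on the other hand, yields the defining inequality of the ellipse explicitly in the original $S$-coordinates, which is useful if one wants to actually draw or work with the cell. One small remark: when you say ``an invertible linear map sends a disk to the region bounded by an ellipse,'' it is worth noting that the relevant map is the restriction of the linear congruence on $\mathrm{Sym}_2(\RR)$ to the affine hyperplane $\{\tr T=c\}$, hence an invertible \emph{affine} map between two-dimensional affine planes; this is of course exactly what is needed, and your penultimate sentence acknowledges the point.
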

\begin{proof}
Let $K = \begin{pmatrix} a & b \\ b & c\end{pmatrix} \succ 0$, $\lambda > 0$, and $\Sigma = \frac{1}{\lambda} K^{-1}$. 
With $S = \begin{pmatrix} s_{11} & s_{12} \\ s_{12} & s_{22} \end{pmatrix}$, from (\ref{eq:concentration}) we get $a s_{11} + 2b s_{12} + c s_{22} = \frac{2}{\lambda}$. Then
$\log\Vor_{\Theta}(\Sigma)$ is defined by the inequalities
$$ \frac{1}{a}\left(\frac{2}{\lambda} - 2b s_{12} - c s_{22}\right)s_{22} - s_{12}^2 \geq 0, \,\, \frac{1}{a}\left(\frac{2}{\lambda} - 2b s_{12} - c s_{22}\right) \geq 0, \,\, s_{22} \geq 0. $$
The quadric defines an ellipse since its $\Delta$-invariant and $\delta$-invariant \cite[Section 5.2]{Gibson1998}
are $\frac{\lambda^2}{a^2} \neq 0$ and $\frac{ac-b^2}{a^2} > 0$, respectively. Finally, the nonnegativity of the quadric implies the other inequalities for positive definite $S$.
\end{proof}

We would like to point out that, despite the concavity of $\ell_n(\Sigma,S)$ on a linear
concentration model $\Theta$, the maximum likelihood degree of $\Theta$ is much bigger than one. This was first studied in \cite{SU10} which included conjectures on the ML degree of such models. Most of these conjectures were settled in \cite{MMW21} and \cite{manivel2020complete}. See also \cite{AGKMS21} and \cite{JKW21} for related work. 

\subsection{Undirected graphical models}
Let $G=(V,E)$ be a simple undirected graph with $|V(G)|=m$. A \textit{concentration model} of $G$ is 
$$\Theta(G)=\{\Sigma\in\PD_m:(\Sigma)^{-1}_{ij}=0 \text{ if }ij\notin E(G)\text{ and }i\neq j\}.$$

Concentration models of undirected graphs are examples of linear concentration models. Thus, their logarithmic Voronoi cells are equal to the log-normal spectrahedra. Following Proposition \ref{prop:concentration}, we can describe logarithmic Voronoi cells explicitly as
$$\log\Vor_{\Theta(G)}(\Sigma)=\{S\in\PD_m: \Sigma_{ij}=S_{ij}\text{ for all } ij\in E(G)\text{ and } i=j\}.$$

\begin{example} Consider the graphical model associated to the undirected path $1-2-3-4$ on four vertices. This model is  
$$\Theta(G)=\{\Sigma\in\PD_4:(\Sigma^{-1})_{13}=(\Sigma^{-1})_{14}=(\Sigma^{-1})_{24}=0\}.$$
Let $\Sigma= \left(\begin{array}{rrrr}
6 & 1 & 1/7 & 1/28 \\
1 & 7 & 1 & 1/4 \\
1/7 & 1 & 8 & 2 \\
1/28 & 1/4 & 2 & 9
\end{array}\right).$ Then the logarithmic Voronoi cell at $\Sigma$ is
$$\log\Vor_{\Theta(G)}(\Sigma)=\left\{(x,y,z): M_{x,y,z} = \left(\begin{array}{rrrr}
6 & 1 & x & y \\
1 & 7 & 1 & z \\
x & 1 & 8 & 2 \\
y & z & 2 & 9
\end{array}\right)\succ 0\right\}.$$
We plot the algebraic boundary of this spectrahedron in the left figure below. It is defined by the quartic $\det(M_{x,y,z})$.
The right figure is the spectrahedron itself where ``ears'' are removed by the quadric that is the third principal minor of $M_{x,y,z}$. 
\begin{center}
\includegraphics[width=0.4\textwidth]{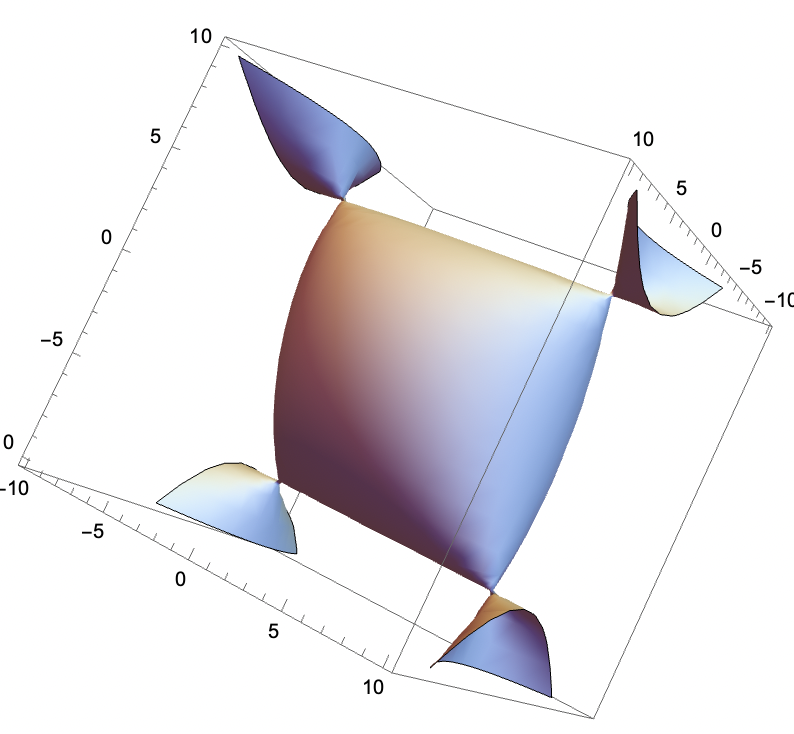},  
\includegraphics[width=0.35\textwidth]{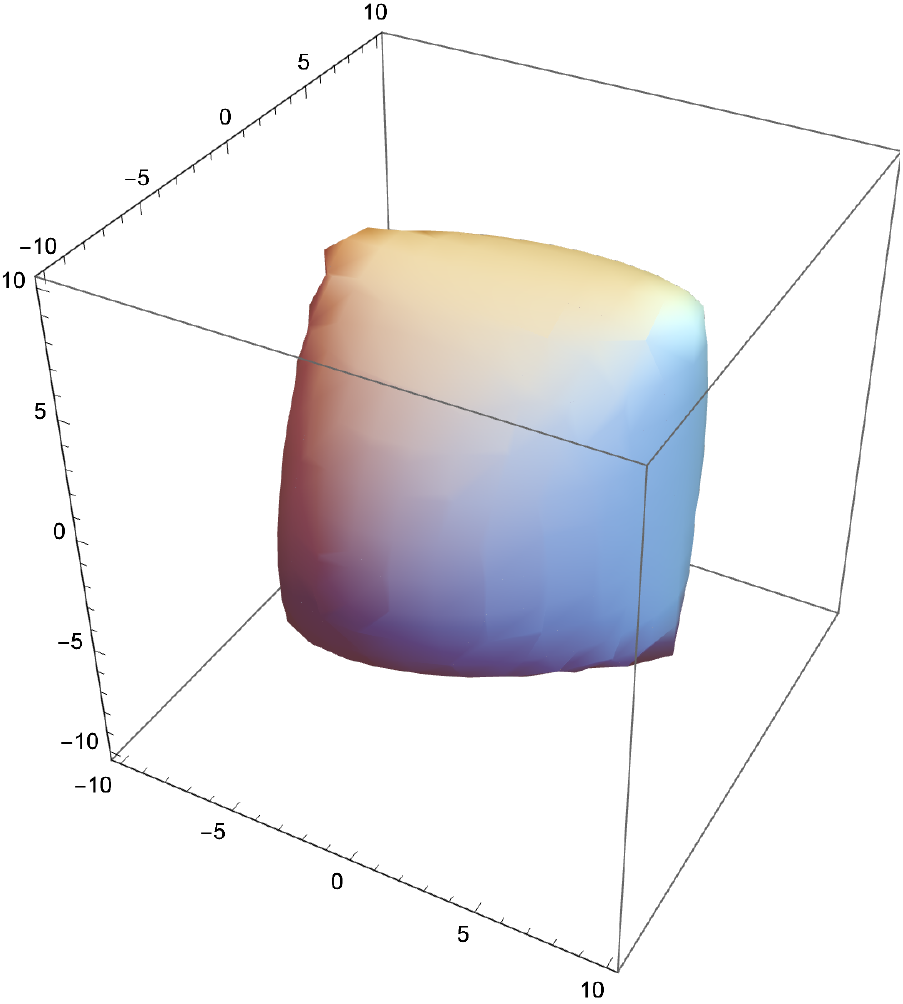}.  
\end{center}
\end{example}
The boundary of $\log\Vor_{\Theta(G)}(\Sigma)$ consists of matrices of rank at most three. This spectrahedron has four singular points that have rank two.  An interesting problem would be to study the logarithmic Voronoi cells of graphical concentration models combinatorially. In the discrete setting, it was done for linear models \cite{alexandr-linear}. In the Gaussian setting, combinatorial types of spectrahedra can be described using \textit{patches}; see \cite{patches, plaumann2021families}.

\subsection{Decomposition of logarithmic Voronoi cells}

In the theory and practice of graphical models, {\it reducible} and {\it decomposable} models
play a significant role \cite{lauritzen1996}, \cite{Sul18}. They provide a recursive structure that can be exploited, for instance, in maximum likelihood estimation. In this subsection, we develop a decomposition theory of the logarithmic Voronoi cells for such models. 

Let $G=(V,E)$ be an undirected graph with the vertex set labeled by $[m]$. A clique of $G$ is a subset $C\subseteq [m]$ such that $ij\in E(G)$ for every $i,j\in C$. We say that a clique in $G$ is \textit{maximal} if the subgraph it induces does not embed into a larger clique of $G$. Let $\C(G)$ denote the set of all cliques of $G$. Note that $\C(G)$ is a simplicial complex on $[m]$, whose facets are the maximal cliques of $G$.  

A simplicial complex $\Gamma\subseteq[m]$ is called \textit{reducible} with decomposition $(\Gamma_1,T,\Gamma_2)$ if there exist sub-complexes $\Gamma_1$, $\Gamma_2$ of $\Gamma$ and a subset $T\subseteq[m]$ such that $\Gamma=\Gamma_1\cup\Gamma_2$ and $\Gamma_1\cap\Gamma_2=2^T$. Moreover, we assume that $\Gamma_i\neq 2^T$ for $i=1,2$. We say $\Gamma$ is \textit{decomposable} if it is reducible and each of the $\Gamma_1,\Gamma_2$ is either decomposable or a simplex. A graphical model associated to an undirected graph $G$ is 
reducible (resp. decomposable) if its complex of cliques $\C(G)$ is reducible (resp. decomposable). 

Given a graph $G$ on $m$ vertices with the complex of cliques $\C(G)$, let $\Theta(G)$ denote the associated graphical model. Suppose $\Theta(G)$ is reducible with a decomposition $(\Gamma_1, T, \Gamma_2)$ of $\C(G)$. Note that the simplicial complex $\Gamma_i$ is the clique complex 
of a subgraph $G_i \subset G$ for $i=1,2$, and the intersection of $G_1$ and $G_2$
is the complete graph on the vertex set $T$.  We will denote the vertex set of $G_1$ by $U$ and the vertex set of $G_2$ by $W$. Associated to these subgraphs we have graphical models $\Theta(G_1)$ and $\Theta(G_2)$. When $A$ is an $m\times m$ matrix whose rows and columns are indexed by $[m]$, we let $A_{IJ}$ denote the submatrix of $A$, whose rows are indexed by $I \subseteq [m]$ and whose columns
are indexed by $J \subseteq [m]$. For any $|I|\times |J|$ matrix $B=(b_{ij})_{i\in I, j\in J}$, we define $[B]^{[m]}$ to be the matrix obtained from $B$ by filling in zero entries to obtain a $m\times m$ matrix, i.e.
$$([B]^{[m]})_{ij}:=\begin{cases}b_{ij}\text{ if }i \in I , j \in J\\ 0\text{ otherwise}\end{cases}.$$
The maximum likelihood estimate of $S \in \PD_m$ in $\Theta(G)$ can be computed as follows.

\begin{prop}\cite[Proposition 5.6]{lauritzen1996}
Let $\Theta(G)$ be a reducible graphical model on the undirected graph $G$ with a decomposition
$(\Gamma_1, T, \Gamma_2)$.
Let $S\in\PD_m$, and let $\hat{\Sigma}_{UU}$ be the MLE of $S_{UU}$ in $\Theta(G_1)$ and let $\hat{\Sigma}_{WW}$ be the MLE of $S_{WW}$ in $\Theta(G_2)$. Let $\hat{\Sigma}_{TT}:=S_{TT}$. 
The maximum likelihood estimate $\hat{\Sigma}$ of $S$ in $\Theta(G)$ is given by
$$\hat{\Sigma}^{-1}=[(\hat{\Sigma}_{UU})^{-1}]^{[m]}+[(\hat{\Sigma}_{WW})^{-1}]^{[m]}-[(\hat{\Sigma}_{TT})^{-1}]^{[m]}.$$
\end{prop}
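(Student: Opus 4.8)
The plan is to reduce the statement to the classical block structure of maximum likelihood estimation for decomposable Gaussian models and verify the proposed formula by checking two things: that the right-hand side lies in the concentration space $\mathcal{L}$ cut out by the missing edges of $G$, and that the matrix it inverts matches $S$ on every clique of $G$. Since $\Theta(G)$ is a linear concentration model, Proposition \ref{prop:concentration} tells us the MLE $\hat\Sigma$ of $S$ is the unique matrix in $\PD_m$ with $\hat\Sigma^{-1}\in\mathcal{L}$ and $\hat\Sigma_{ij}=S_{ij}$ for all $ij\in E(G)$ and all $i=j$; equivalently $\hat\Sigma_{CC}=S_{CC}$ for every clique $C\in\C(G)$. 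So it suffices to show the matrix $\hat\Sigma$ defined by the stated inverse formula has these two properties.

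First I would record what the decomposition buys us combinatorially: every clique $C$ of $G$ lies entirely in $U$ or entirely in $W$, because $\C(G)=\Gamma_1\cup\Gamma_2$ and a clique is a single face of the complex. In particular the only edges of $G$ straddling $U\setminus T$ and $W\setminus T$ are absent, so for $i\in U\setminus T$ and $j\in W\setminus T$ we need $(\hat\Sigma^{-1})_{ij}=0$; but in the formula $[(\hat\Sigma_{UU})^{-1}]^{[m]}$ has zero $(i,j)$ entry (its columns are indexed by $U$), and likewise the $W$-term and the $T$-term, so that entry vanishes. For the within-$U$ missing edges one uses that $\hat\Sigma_{UU}^{-1}$ already lies in the concentration space of $G_1$ (again Proposition \ref{prop:concentration} applied to $\Theta(G_1)$), and the $W$- and $T$-terms do not contribute off-diagonal entries inside $U\setminus T$; symmetrically for $W$. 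Hence $\hat\Sigma^{-1}\in\mathcal{L}$.

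The substantive step is the clique-matching condition, and this is where I expect the main obstacle. Take a clique $C$; say $C\subseteq U$. I want $\hat\Sigma_{CC}=S_{CC}$, knowing $\hat\Sigma_{UU}^{-1}=\hat\Sigma_{UU}^{-1}$ is the MLE inverse in $\Theta(G_1)$, so already $(\hat\Sigma_{UU})_{CC}=S_{CC}$. The issue is that inverting the full $m\times m$ sum and then restricting to $U$ need not give back $\hat\Sigma_{UU}$ for free — one must control the Schur complement coming from the $W\setminus T$ block. Here I would invoke the standard fact (this is exactly the content of \cite[Proposition 5.6]{lauritzen1996} / Lemma 5.5 there, or a direct Schur-complement computation) that if a positive definite $K$ has the block form $K = [A^{[m]}] + [B^{[m]}] - [D^{[m]}]$ where $A$ is indexed by $U$, $B$ by $W$, $D$ by $T=U\cap W$, and $A,B,D$ are the inverses of $\hat\Sigma_{UU},\hat\Sigma_{WW},S_{TT}$ respectively with $\hat\Sigma_{UU}$ and $\hat\Sigma_{WW}$ agreeing on the $T$-block (both equal $S_{TT}$), then $(K^{-1})_{UU}=\hat\Sigma_{UU}$ and $(K^{-1})_{WW}=\hat\Sigma_{WW}$. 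The cleanest way to see this is to conjugate $K$ so that the $W\setminus T$ rows/columns are put in block-triangular form against the rest, compute the Schur complement of the $(W\setminus T,W\setminus T)$ block, and observe that the $B$-term is arranged precisely so that this Schur complement collapses to $A$ on $U$; the matching of $\hat\Sigma_{WW}$ and $S_{TT}$ on the overlap is what makes the cross terms cancel. Once $(\hat\Sigma)_{UU}=\hat\Sigma_{UU}$ and $(\hat\Sigma)_{WW}=\hat\Sigma_{WW}$, every clique sits inside one of these two blocks and the clique-matching condition follows.

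Finally I would note positive definiteness of $\hat\Sigma$: $\hat\Sigma^{-1}$ is the sum of the two positive semidefinite matrices $[(\hat\Sigma_{UU})^{-1}]^{[m]}$ and $[(\hat\Sigma_{WW})^{-1}]^{[m]}$ minus $[(S_{TT})^{-1}]^{[m]}$, and the block computation above exhibits it as congruent to a block-diagonal matrix with positive definite blocks, so $\hat\Sigma^{-1}\succ 0$ and hence $\hat\Sigma\in\PD_m$. By uniqueness in Proposition \ref{prop:concentration} this $\hat\Sigma$ is the MLE, which is the claim. The only genuinely delicate point is the Schur-complement bookkeeping in the previous paragraph; everything else is combinatorial packaging of the decomposition.
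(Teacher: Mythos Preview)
The paper does not supply its own proof of this proposition: it is quoted directly from \cite[Proposition 5.6]{lauritzen1996} and used as a black box in the proof of Theorem~\ref{thm:concentration-decomposition}. So there is no ``paper's proof'' to compare against in the strict sense.

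That said, your argument is correct and in fact anticipates exactly the Schur-complement bookkeeping that the paper carries out one result later, in the proof of Theorem~\ref{thm:concentration-decomposition}. There the authors write the block form of $S^{-1}$ (with $A=U\setminus T$, $B=W\setminus T$) and assert ``Using Schur complements one checks that $S_{UU}=((S^{-1})^{-1})_{UU}=(S_1^{-1})^{-1}=S_1$''; this is precisely the computation you outline, and the cancellation you describe --- that the Schur complement of the $(B,B)$ block collapses to $\hat\Sigma_{UU}^{-1}$ because $(\hat\Sigma_{WW})_{TT}=S_{TT}$ --- is the mechanism behind that one-line claim. Your verification that $\hat\Sigma^{-1}\in\mathcal{L}$ and your appeal to uniqueness via Proposition~\ref{prop:concentration} are also the natural way to frame the argument. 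So your proposal is both correct and methodologically aligned with the paper; it simply fills in a proof the paper chose to outsource to Lauritzen.
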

For a graph $G$ and a matrix $\Sigma\in \Theta(G)$, we denote the logarithmic Voronoi cell at $\Sigma$ by $\log\Vor_{G}(\Sigma)$. For reducible graphical models, we have the following decomposition theorem.
\begin{thm} \label{thm:concentration-decomposition}
Let $\Theta(G)$ be a reducible graphical model on the undirected graph $G$ with a decomposition
$(\Gamma_1, T, \Gamma_2)$.
For any matrix $\Sigma\in \Theta(G)$, the logarithmic Voronoi cell $\log\Vor_{G}(\Sigma)$ equals
\begin{align*}
\mscriptsize{\Bigg(\left\{\Big([S_{1}^{-1}]^{[m]}+[S_{2}^{-1}]^{[m]}-[\Sigma_{TT}^{-1}]^{[m]}\Big)^{-1}: S_{1}\in\log\Vor_{G_1}(\Sigma_{UU})\text{ and } S_{2}\in\log\Vor_{G_2}(\Sigma_{WW})\right\}
+\ker(\psi)\Bigg)\cap \PD_m},
\end{align*}
where $\psi:\Sym(\RR^{m})\to\Sym(\RR^{U})\times\Sym(\RR^{W})$ is the map $$\psi:M\mapsto(M_{UU},M_{WW}).$$
\end{thm}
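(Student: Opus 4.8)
The plan is to characterize membership in $\log\Vor_G(\Sigma)$ directly from the definition and match it against the set on the right-hand side. Recall from Proposition \ref{prop:concentration} that for a graphical model, $S \in \log\Vor_G(\Sigma)$ if and only if $S \succ 0$ and the MLE of $S$ in $\Theta(G)$ equals $\Sigma$; equivalently, $S_{ij} = \Sigma_{ij}$ for all $ij \in E(G)$ and all $i=j$. So the first step is to apply Lauritzen's Proposition (\cite[Proposition 5.6]{lauritzen1996}) to express the MLE $\hat\Sigma$ of $S$ in terms of the MLEs $\hat\Sigma_{UU}$ of $S_{UU}$ in $\Theta(G_1)$ and $\hat\Sigma_{WW}$ of $S_{WW}$ in $\Theta(G_2)$, together with $\hat\Sigma_{TT} = S_{TT}$. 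The key observation is that the MLE depends on $S$ only through $S_{UU}$ and $S_{WW}$ (since $U \cup W = [m]$ and $T = U \cap W$), i.e. it is constant along fibers of the map $\psi$.

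Next I would unwind the condition $\hat\Sigma = \Sigma$ into a statement about the two blocks. First, note that $T \subseteq U$ and $T \subseteq W$, and since $T$ induces a complete subgraph of both $G_1$ and $G_2$, the MLE in each subgraph leaves the $T$-block untouched: $(\hat\Sigma_{UU})_{TT} = S_{TT}$ and $(\hat\Sigma_{WW})_{TT} = S_{TT}$. The inversion formula from Lauritzen then gives $\hat\Sigma^{-1} = [(\hat\Sigma_{UU})^{-1}]^{[m]} + [(\hat\Sigma_{WW})^{-1}]^{[m]} - [S_{TT}^{-1}]^{[m]}$. I claim $\hat\Sigma = \Sigma$ holds if and only if $\hat\Sigma_{UU} = \Sigma_{UU}$ and $\hat\Sigma_{WW} = \Sigma_{WW}$, i.e. if and only if $S_{UU} \in \log\Vor_{G_1}(\Sigma_{UU})$ and $S_{WW} \in \log\Vor_{G_2}(\Sigma_{WW})$. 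The ``if'' direction is immediate by plugging into the formula (and using $\Sigma_{TT} = S_{TT}$, which follows since both $\hat\Sigma_{UU}$ and $\Sigma$ agree with $\Sigma_{UU}$ on the $T$-block, forcing $S_{TT} = (\hat\Sigma_{UU})_{TT} = \Sigma_{TT}$ once we know $S_{UU} \in \log\Vor_{G_1}(\Sigma_{UU})$). For ``only if,'' I would extract $\hat\Sigma_{UU}$ and $\hat\Sigma_{WW}$ from $\Sigma$ by running the decomposition recursively or by using that the edge/diagonal constraints of $G_1$ on $\Sigma_{UU}$ are exactly those of $G$ restricted to $U$; since $\Sigma \in \Theta(G)$, its $U$-block satisfies the support constraints of $\Theta(G_1)$, and likewise for $W$. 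This is the step that needs the most care: I must verify that $\hat\Sigma = \Sigma$ genuinely forces the blockwise MLE conditions and does not merely follow from them, which amounts to checking that the map $S \mapsto (S_{UU} \text{ MLE-block}, S_{WW} \text{ MLE-block})$ is "block-diagonal" in the appropriate sense — this is where Lauritzen's formula must be inverted rather than just applied.

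Finally I would assemble the set. Given any pair $S_1 \in \log\Vor_{G_1}(\Sigma_{UU})$ and $S_2 \in \log\Vor_{G_2}(\Sigma_{WW})$, the matrix $\Sigma' := \big([S_1^{-1}]^{[m]} + [S_2^{-1}]^{[m]} - [\Sigma_{TT}^{-1}]^{[m]}\big)^{-1}$ is, by the "if" direction above, exactly the common MLE of every $S \in \PD_m$ with $S_{UU} = S_1$ and $S_{WW} = S_2$; and by the "only if" direction, every $S \in \log\Vor_G(\Sigma)$ arises this way. Since the fiber $\psi^{-1}(S_1, S_2)$ is a coset of $\ker(\psi)$ containing $\Sigma'$ (one checks $\Sigma'_{UU} = S_1$, $\Sigma'_{WW} = S_2$ from the construction, again using $T \subseteq U \cap W$ and that the $T$-blocks are uninvolved), intersecting with $\PD_m$ to retain only valid sample covariance matrices gives precisely the claimed description: the union over all such pairs of $(\Sigma' + \ker\psi) \cap \PD_m$. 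The main obstacle throughout is bookkeeping with the $[\,\cdot\,]^{[m]}$ zero-padding and the overlapping index sets $U, W, T$ — in particular confirming that $\Sigma'$ really has the prescribed $U$- and $W$-blocks, which is what makes the coset-of-$\ker\psi$ statement correct.
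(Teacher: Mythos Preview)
Your proposal is correct and follows essentially the same route as the paper: both directions hinge on the Schur-complement identity showing that if $S^{-1}=[S_1^{-1}]^{[m]}+[S_2^{-1}]^{[m]}-[\Sigma_{TT}^{-1}]^{[m]}$ then $S_{UU}=S_1$ and $S_{WW}=S_2$, combined with the edge/diagonal (equivalently, clique-wise) description of $\log\Vor_G(\Sigma)$. The one place you make life harder than necessary is the ``only if'' direction: you frame it as needing to \emph{invert} Lauritzen's formula to deduce $\hat\Sigma_{UU}=\Sigma_{UU}$ and $\hat\Sigma_{WW}=\Sigma_{WW}$ from $\hat\Sigma=\Sigma$, and flag this as the delicate step. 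The paper bypasses this entirely by staying with the clique characterization: if $S\in\log\Vor_G(\Sigma)$ then $S_{CC}=\Sigma_{CC}$ for every clique $C$ of $G$, and since every clique of $G_1$ is already a clique of $G$ contained in $U$, one reads off $(S_{UU})_{CC}=(\Sigma_{UU})_{CC}$ directly, giving $S_{UU}\in\log\Vor_{G_1}(\Sigma_{UU})$ with no appeal to the MLE formula at all. So your worry about ``inverting'' is avoidable, and the argument is shorter than you anticipate.
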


\begin{proof}
First, observe that the projections $\Sigma_{UU}$ and $\Sigma_{WW}$ are in $\Theta(G_1)$ and $\Theta(G_2)$, respectively. This follows from the Schur complement formula for matrix inverses. Let $A:=U \setminus T$ and $B:=W \setminus T$. First consider the matrix $S$ given by 
$$S^{-1}=[S_{1}^{-1}]^{[m]}+[S_{2}^{-1}]^{[m]}-[\Sigma_{TT}^{-1}]^{[m]}$$
where $S_{1}\in\log\Vor_{G_1}(\Sigma_{UU})\text{ and } S_{2}\in\log\Vor_{G_2}(\Sigma_{WW})$. We will show that $S\in\log\Vor_G(\Sigma)$. Recall that the logarithmic Voronoi cell at $\Sigma$ is the set 
$$\log\Vor_{G}(\Sigma)=\{S\in\PD_m: \Sigma_{ij}=S_{ij}\text{ for all } ij\in E(G)\text{ and } i = j\}.$$
Hence, it suffices to show that $S_{CC}=\Sigma_{CC}$ for every clique $C$ of $G$. Note first that we may write $S^{-1}$ in the block form as follows:
$$S^{-1}=\begin{bmatrix}(S_{1}^{-1})_{AA}&(S_{1}^{-1})_{AT}&0\\
(S_{1}^{-1})_{TA}&(S_{1}^{-1})_{TT}+(S_{2}^{-1})_{TT}-(\Sigma_{TT}^{-1})&(S_{2}^{-1})_{TB}\\
0&(S_{2}^{-1})_{BT}&(S_{2}^{-1})_{BB}
\end{bmatrix}.$$
Using Schur complements one checks that
$S_{UU}=((S^{-1})^{-1})_{UU}=(S_1^{-1})^{-1}=S_1$
and $S_{WW} = S_2$. Now, let $C$ be a clique in $G$, so either $C\subseteq\Gamma_1$ or $C\subseteq\Gamma_2$. Without loss of generality, assume $C\subseteq\Gamma_1$. Then
$$S_{CC}=(S_{UU})_{CC}=(S_{1})_{CC}=(\Sigma_{UU})_{CC}=\Sigma_{CC},$$
and we conclude that $S\in\log\Vor_G(\Sigma)$.

Now let $M=(m_{ij})\in\ker(\psi)$, i.e., $m_{ij}=0$ for all $ij \in E(G)$ or $i=j$. In particular, $M_{CC}=0$ for every clique $C$ of $G$. Thus, if $S+M$ is positive definite, we have $S+M\in\log\Vor_G(\Sigma)$, as desired.

For the other direction, let $S\in\log\Vor_{G}(\Sigma)$. Define $S_1:=S_{UU}$ and $S_2:=S_{WW}$. Note that for any clique $C\subseteq\Gamma_1$, we have $(S_1)_{CC}=(S_{UU})_{CC}=S_{CC}=\Sigma_{CC}=(\Sigma_{UU})_{CC}$, so $S_1\in\log\Vor_{G_1}(\Sigma_{UU})$. Similarly, $S_2\in\log\Vor_{G_2}(\Sigma_{WW})$. Let 
$L =[S_1^{-1}]^{[m]}+[S_2^{-1}]^{[m]}-[\Sigma_{TT}^{-1}]^{[m]}$,
and let $M:=S-(L^{-1})$. Note that $S=L^{-1}+(S-L^{-1})=L^{-1}+M$, so it suffices to show that $M\in\ker(\psi)$. We observe that 
$(L^{-1})_{UU}=((S_1^{-1})^{-1})_{UU}=S_1=S_{UU}$,
so $M_{UU}=S_{UU}-(L^{-1})_{UU}=0$. Similarly, we find that $M_{WW}=0$. Hence, indeed $M\in\ker(\psi)$, and this concludes the proof.
\end{proof}

\color{black}
\begin{remark}
The analogous decomposition theorem holds for discrete hierarchical models associated to a reducible simplicial complex. The proof is parallel to the one we presented above where 
sums of matrices are replaced by products of entries of points and differences of matrices 
are replaced by ratios of entries of points. In both cases, the decomposition of logarithmic
Voronoi cells is interesting: $\log\Vor_G(\Sigma)$ as well as $\log \Vor_{G_1}(\Sigma_{UU})$
and $\log \Vor_{G_2}(\Sigma_{WW})$ are spectrahedra, but the first term in the Minkowski
sum in Theorem \ref{thm:concentration-decomposition} is a nonlinear object. How the geometry and combinatorics of the spectrahedra $\log \Vor_{G_1}(\Sigma_{UU})$ and $\log \Vor_{G_2}(\Sigma_{WW})$ affect that of $\log \Vor_G(\Sigma)$ via this decomposition is worthwhile to study in a future project. 
\end{remark}

\section{Directed graphical models}

In this section we turn to Gaussian models defined by {\it directed acyclic graphs} (DAGs).
A DAG $G$ consists of a vertex set $V$ of cardinality $m$ and a set $E$ of directed edges $(i,j)$ without a directed cycle. We will assume that $(i,j) \in E$ implies $i < j$. Such a topological ordering of the vertices can always be achieved. For each vertex $j \in G$ there is a 
normal random variable $X_j$ such that $X_j = \sum_{k \in \mathrm{pa}(j)} \lambda_{jk} X_k + \epsilon_j$. Here $\mathrm{pa}(j)$ denotes the set of parents of the vertex $j$. The coefficients
$\lambda_{jk}$ are real parameters, known as \textit{regression coefficients}. The term $\varepsilon_j$ is a random variable that has a univariate normal distribution. This model can be summarized by the 
identity 
$$ X = \Lambda^T X + \varepsilon$$
where $\Lambda = (\lambda_{kj})$ is an upper triangular matrix with $\lambda_{kk} = 0$ for $k=1, \ldots, m$. The joint random variable $X = (X_1, \ldots, X_m)^T$ has a Gaussian distribution with covariance
matrix $\Sigma$. We also denote the diagonal covariance matrix of $\varepsilon$ by $\Omega$.
With this $\Sigma = (I - \Lambda)^{-T} \Omega (I - \Lambda)^{-1}$, and 
the maximum likelihood estimation aims to estimate the $|E|$ and $m$ parameters 
in $\Lambda$ and $\Omega$, respectively. The concentration matrix $K = \Sigma^{-1}$ is
equal to $(I - \Lambda) \Omega^{-1} (I - \Lambda)^T$. The maximum likelihood estimate can 
be found by solving a sequence of independent least squares problems for each vertex in the graph coming from the gradient of the log-likelihood function: 
Given $n$ independent observations 
of the random variable $X$, we collect them in a $n \times m$ matrix $Y$. Then the log-likelihood function is 
$$ \ell_n(\Sigma, Y^TY) = \log \det K - \tr(Y^TYK) = - \sum_{k=1}^m \log \Omega_{kk} - \sum_{k=1}^m \frac{1}{\Omega_{kk}} [(Y(I-\Lambda))^T(Y(I-\Lambda))]_{kk}.$$
One observes the solutions to $\nabla \ell_n = 0$ are obtained 
by first minimizing $|| Y_k - \sum_{j=1}^{k-1} \lambda_{jk} Y_j||^2$ for $k=1, \ldots, m$ independently. These are least squares problems with unique solutions. This development leads to the following.
\begin{thm} \cite{Wermuth}, \cite[p. 154]{lauritzen1996} \label{thm:directed-mldegree}
The maximum likelihood degree of a Gaussian model on a directed acyclic graph is one. Therefore,
the logarithmic Voronoi cell of $\Sigma$ on such a model is equal to its log-normal spectrahedron. 
\end{thm}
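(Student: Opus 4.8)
The plan is to exploit the factored form of the log-likelihood displayed just above the statement and reduce the critical-point count to a collection of independent least-squares problems, one per vertex, each governed by a linear system of normal equations. Concretely, I would parametrize the model by the pair $(\Lambda,\Omega)$, with $\Lambda$ upper triangular, zero on the diagonal, supported on $E$, and $\Omega$ diagonal, and rewrite $\ell_n$ as $\sum_{k=1}^m\bigl(-\log\Omega_{kk}-\Omega_{kk}^{-1}q_k(\Lambda)\bigr)$, where $q_k(\Lambda)=\bigl\|Y_k-\sum_{j\in\mathrm{pa}(k)}\lambda_{jk}Y_j\bigr\|^2$ depends only on the regression coefficients entering vertex $k$. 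The key observation is that each variable $\lambda_{jk}$ occurs in exactly one summand, so the system $\nabla\ell_n=0$ decouples into $m$ independent blocks.

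For fixed $k$, the equations $\partial\ell_n/\partial\lambda_{jk}=0$, $j\in\mathrm{pa}(k)$, after clearing the factor $\Omega_{kk}^{-1}$, are precisely the normal equations of the regression of $Y_k$ on $\{Y_j: j\in\mathrm{pa}(k)\}$; their coefficient matrix is the Gram matrix, which coincides with the principal submatrix $S_{\mathrm{pa}(k),\mathrm{pa}(k)}$ of $S=Y^TY$. For generic $S$ this submatrix is invertible over $\mathbb{C}$, so the block has a unique solution $\hat\Lambda$-entries; then $\partial\ell_n/\partial\Omega_{kk}=0$ reads $-\Omega_{kk}+q_k(\Lambda)=0$ after clearing denominators, determining $\hat\Omega_{kk}=q_k(\hat\Lambda)\neq 0$ uniquely. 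Collecting these over all $k$ yields a single complex critical point $(\hat\Lambda,\hat\Omega)$ of $\ell_n\circ\phi$, where $\phi(\Lambda,\Omega)=(I-\Lambda)^{-T}\Omega(I-\Lambda)^{-1}$.

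It then remains to transfer this to the statement about critical points of $\ell_n$ on the Zariski closure of $\Theta$ in complex symmetric matrices. Since $\phi$ is a birational parametrization of this variety and a submersion onto it at generic points, critical points of $\ell_n$ on the variety correspond bijectively, for generic $S$, to critical points of $\ell_n\circ\phi$; hence there is exactly one, it lies at a nonsingular point of the variety, and the ML degree is one. The conclusion for logarithmic Voronoi cells is then immediate from Corollary \ref{cor:ML-degree-one}. The main obstacle is the bookkeeping in this last transfer: verifying that $\phi$ is generically an immersion of the expected dimension and birational onto its image, and that a single genericity condition on $S$ simultaneously renders all the submatrices $S_{\mathrm{pa}(k),\mathrm{pa}(k)}$ nonsingular and the resulting critical point smooth — everything else is routine once this is in place. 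Alternatively, one may simply cite \cite{Wermuth} and \cite[p. 154]{lauritzen1996} for the ML-degree-one fact and apply Corollary \ref{cor:ML-degree-one}.
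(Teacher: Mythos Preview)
Your proposal is correct and follows essentially the same approach the paper outlines just before the theorem: decouple $\nabla\ell_n=0$ into independent least-squares blocks, one per vertex, each with a unique solution, and then invoke Corollary~\ref{cor:ML-degree-one}. You supply more detail than the paper does---in particular the birational transfer from critical points of $\ell_n\circ\phi$ to critical points on the Zariski closure of $\Theta$---but the core idea is identical, and the paper ultimately defers the full argument to the cited references.
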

\begin{remark}
We were not aware of the fact that the maximum likelihood degree of Gaussian graphical models
on DAGs is one until we observed this through multiple computations. We are grateful to Piotr Zwiernik for sharing with us his notes of  the proof which we outlined above. The references we included point to the same result.
\end{remark}
For algebraic computations a convenient parametrization for Gaussian models on DAGs based on the {\it trek rule} exists \cite{seth-graphical-paper}. In this parametrization, for each directed edge $(i,j) \in E$ there is $\lambda_{ij}$ and for each vertex $i \in [m]$ there is $a_i$. For 
each pair of vertices $i,j \in [m]$, we let $T(i,j)$ be the set of paths from $i$ to $j$ which do 
not contain colliders where a collider is a pair of edges $(s,t)$ and $(u,t)$ with the same head. Such a path without colliders is called a trek. Every trek $P$ from $i$ to $j$ is a sequence of edges from $i$ up to $\mathrm{top}(P)$, the ``top''  vertex on the path, and then 
a sequence of edges down to $j$. With this the parametrization of the entries of the covariance matrix $\Sigma$ reads as follows:
$$ \sigma_{ij} \, = \, \sum_{P \in T(i,j)} a_{\mathrm{top}(P)} \prod_{(k,l) \in P} \lambda_{kl}.$$
We note that $\sigma_{ii} = a_i$, and if $T(i,j) = \emptyset$ then $\sigma_{ij} = 0$.
\begin{example}
Consider the DAG $1\to 2\to 4\leftarrow 3.$ The associated graphical model $\Theta$ is seven-dimensional. We may express $\Sigma=(\sigma_{ij})\in\Theta$ parametrically as
\begin{align*}
    &\sigma_{ii}=a_i \text{ for } i=1,2,3,4,\\
    &\sigma_{12}=a_1\lambda_{12},\; \sigma_{13}=0,\; \sigma_{14}=a_1\lambda_{12}\lambda_{24},\;,\sigma_{23}=0,\sigma_{24}=a_2\lambda_{24}, \sigma_{34}=a_3\lambda_{34}.\\
\end{align*}
The logarithmic Voronoi cell and hence  the log-normal spectrahedron of a general $\Sigma\in\Theta$ is three-dimensional, given as
$$\left\{\left(\begin{array}{rrrr}
a_{1} & a_{1} \lambda_{12} & x & y \\
a_{1} \lambda_{12} & a_{2} & z & a_{2} \lambda_{24} + \lambda_{34} z \\
x & z & a_{3} & a_{3} \lambda_{34} + \lambda_{24} z \\
y & a_{2} \lambda_{24} + \lambda_{34} z & a_{3} \lambda_{34} + \lambda_{24} z & 2 \, \lambda_{24} \lambda_{34} z + a_{4}
\end{array}\right)\succ 0: x,y,z\in\RR\right\}.$$
For the matrix $\Sigma'$ given by the parameters $$a_1=1,a_2=2,a_3=3,a_4=4,\lambda_{12}=1/2,\lambda_{24}=1,\lambda_{34}=1/2,$$
the spectrahedron $\log\Vor_{\Theta}(\Sigma')$ is the intersection of a quadric, defining a cylinder, and a quartic, defining a surface with five components. The intersection is the middle component of the quartic surface. We plot the quadric surface (left), the quartic surface (middle) and their intersection (right) in Figure \ref{figure:graphical-path}.

\begin{figure}[ht]
    \centering
    \includegraphics[width=0.3\textwidth]{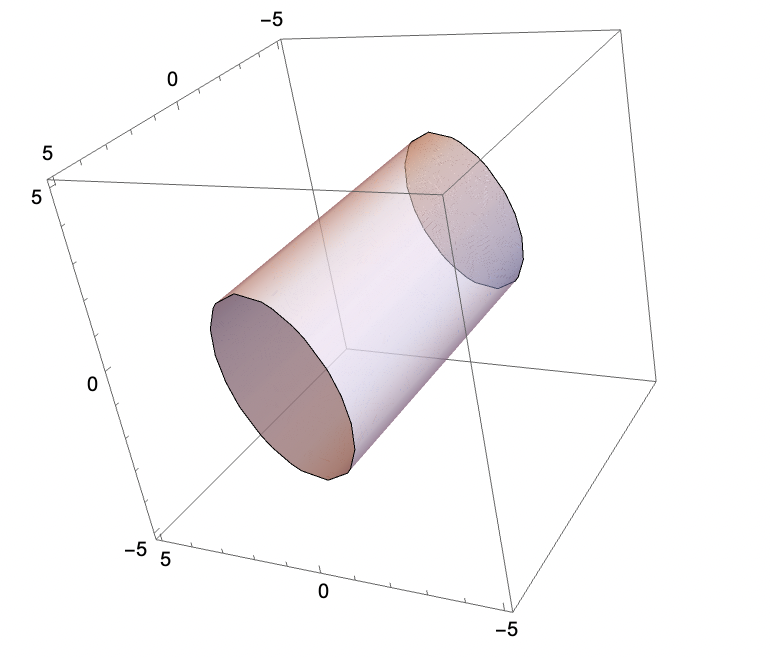} \hspace{2em}
    \includegraphics[width=0.27\textwidth]{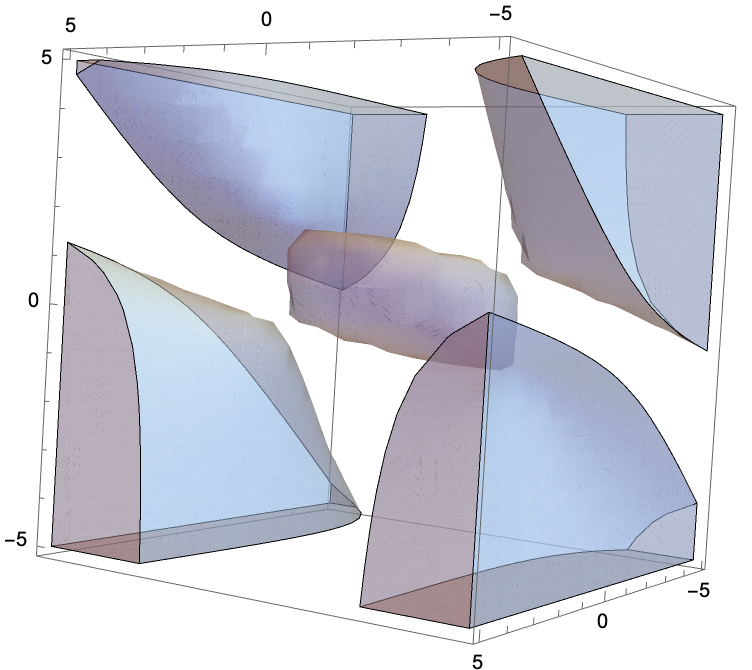} \hspace{2em}
    \includegraphics[width=0.27\textwidth]{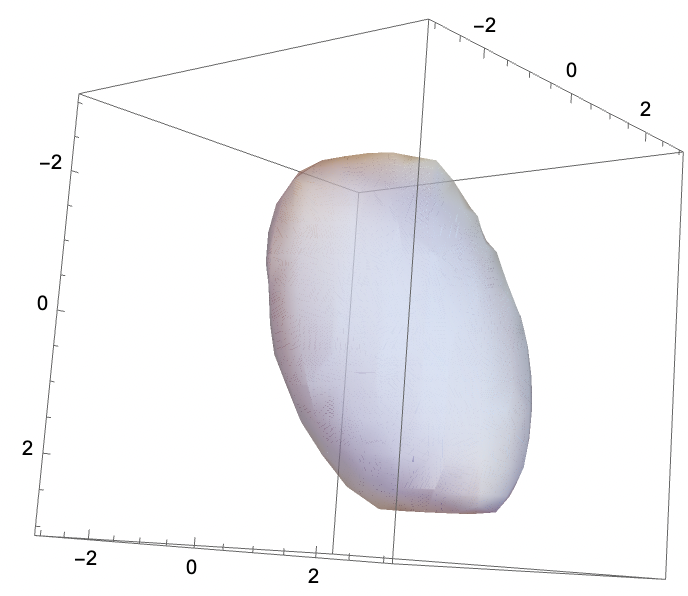} 
    \caption{The logarithmic Voronoi cell at $\Sigma'$ of $1\to 2\to 4\leftarrow 3$.}
    \label{figure:graphical-path}
\end{figure}

\end{example}

We close this section with a simple decomposition result for logarithmic Voronoi cells when the underlying graph is the disjoint union of two graphs. 
\begin{prop} \label{prop:DAG-decomposition}
Let $G$ be a DAG with vertex set $[m]$ such that $G$ is a disjoint union of two graphs $G_1$
and $G_2$ with vertex sets $U$ and $W = [m] \setminus U$, respectively. 
Then 
$$ \Theta(G) = \left\{ \Sigma = \begin{pmatrix} \Sigma_1 & 0_{UW} \\ 0_{WU} & \Sigma_2 \end{pmatrix} \, : \, \Sigma_1 \in \Theta(G_1), \Sigma_2 \in \Theta(G_2) \right\},$$
and for $\Sigma \in \Theta(G)$
$$ \log\Vor_{\Theta(G)}(\Sigma) = \left\{ \begin{pmatrix} S_1 & S_{UW} \\ S_{WU} & S_2 \end{pmatrix} \succ 0 \, : \, S_1 \in \log\Vor_{\Theta(G_1)}(\Sigma_1), S_2 \in \log\Vor_{\Theta(G_2)}(\Sigma_2)\right\}.$$
\end{prop}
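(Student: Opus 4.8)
The plan is to reduce everything to the known characterizations of the parameter space and the logarithmic Voronoi cell for DAG models, and then exploit the block-diagonal structure forced by the absence of edges between $U$ and $W$. First I would establish the description of $\Theta(G)$. Since $G$ is the disjoint union of $G_1$ and $G_2$, there are no directed edges between $U$ and $W$, so in the structural equations $X = \Lambda^T X + \varepsilon$ the matrix $\Lambda$ is block diagonal with blocks $\Lambda_1$ and $\Lambda_2$, and $\Omega$ is diagonal hence block diagonal. Therefore $\Sigma = (I-\Lambda)^{-T}\Omega(I-\Lambda)^{-1}$ is block diagonal with blocks $\Sigma_i = (I-\Lambda_i)^{-T}\Omega_i(I-\Lambda_i)^{-1}$, giving $\Sigma_i \in \Theta(G_i)$; conversely any block-diagonal $\Sigma$ with $\Sigma_i \in \Theta(G_i)$ arises this way, so the first displayed equality holds. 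Equivalently, one can see this from the trek rule: for $i \in U$ and $j \in W$ there are no treks between $i$ and $j$, so $\sigma_{ij} = 0$, and the blocks are parametrized independently.

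Next I would characterize $\log\Vor_{\Theta(G)}(\Sigma)$. By Theorem~\ref{thm:directed-mldegree} the ML degree is one, so $\log\Vor_{\Theta(G)}(\Sigma) = \mathcal{K}_{\Theta(G)}(\Sigma)$, the log-normal spectrahedron, i.e.\ the set of $S \in \PD_m$ for which $\nabla_\Sigma \ell_n(\Sigma,S)$ lies in the normal space $N_\Sigma\Theta(G)$. The key observation is that the tangent space to $\Theta(G)$ at $\Sigma$ respects the block decomposition: moving $\Sigma_1$ inside $\Theta(G_1)$ and $\Sigma_2$ inside $\Theta(G_2)$ (keeping the off-diagonal blocks zero) spans $T_\Sigma\Theta(G)$, so $T_\Sigma\Theta(G) = \{\operatorname{diag}(V_1,V_2) : V_i \in T_{\Sigma_i}\Theta(G_i)\}$. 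Because $\Sigma$ itself is block diagonal, the gradient of the log-likelihood has a compatible structure: writing $\nabla_\Sigma \ell_n(\Sigma,S) = -\tfrac{n}{2}(\Sigma^{-1} - \Sigma^{-1}S\Sigma^{-1})$ and using that $\Sigma^{-1}$ is block diagonal, the $UU$-block of this gradient depends only on $S_{UU}$ (and equals, up to constants, the gradient for the $G_1$ problem at $\Sigma_1$ with sample $S_1 = S_{UU}$), and likewise for $WW$; the off-diagonal block is $\Sigma_1^{-1} S_{UW}\Sigma_2^{-1}$ up to scalar. The normality condition $\langle \nabla_\Sigma\ell_n, V\rangle = 0$ for all $V \in T_\Sigma\Theta(G)$ then splits: since admissible $V$ are block diagonal, only the $UU$- and $WW$-blocks of the gradient are tested, giving independently the condition that $S_{UU} \in \log\Vor_{\Theta(G_1)}(\Sigma_1)$ (as a matrix-space membership, before imposing positive definiteness) and $S_{WW} \in \log\Vor_{\Theta(G_2)}(\Sigma_2)$, with no constraint whatsoever on $S_{UW}$.

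Finally I would assemble the two directions. For containment $\supseteq$: given $S_i \in \log\Vor_{\Theta(G_i)}(\Sigma_i)$ and any off-diagonal $S_{UW}$ making $S \succ 0$, the block decomposition above shows $\nabla_\Sigma\ell_n(\Sigma,S) \in N_\Sigma\Theta(G)$, so $S \in \mathcal{K}_{\Theta(G)}(\Sigma) = \log\Vor_{\Theta(G)}(\Sigma)$. For $\subseteq$: given $S \in \log\Vor_{\Theta(G)}(\Sigma)$, restrict the normality condition to block-diagonal tangent vectors supported on the first block to get $S_{UU} \in \log\Vor_{\Theta(G_1)}(\Sigma_1)$ — here I use that $S_{UU} \succ 0$ because $S \succ 0$ — and symmetrically $S_{WW} \in \log\Vor_{\Theta(G_2)}(\Sigma_2)$; the off-diagonal block is unconstrained. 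This gives the reverse inclusion and completes the proof.

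The main obstacle I expect is the bookkeeping in verifying that the $UU$-block of $\nabla_\Sigma\ell_n(\Sigma,S)$ for the big model genuinely coincides (up to an irrelevant positive scalar) with the gradient of the $G_1$ log-likelihood at $\Sigma_1$ with sample $S_{UU}$, so that the normal-space conditions factor cleanly; this hinges on $\Sigma^{-1} = \operatorname{diag}(\Sigma_1^{-1}, \Sigma_2^{-1})$ and the fact that $(\Sigma^{-1}S\Sigma^{-1})_{UU} = \Sigma_1^{-1} S_{UU}\Sigma_1^{-1}$ depends only on $S_{UU}$. Once that identity is in hand, the rest is a formal splitting argument. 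A secondary point to be careful about is that $\log\Vor_{\Theta(G_i)}(\Sigma_i)$ as it appears in the statement already carries the positive-definiteness requirement on $S_i = S_{UU}$, which is automatic from $S \succ 0$ in the forward direction and is a hypothesis in the reverse direction, so the two formulations match.
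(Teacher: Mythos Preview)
Your argument is correct, but it takes a longer route than the paper. For the first claim you and the paper agree (the paper just cites Proposition~3.6 in \cite{seth-graphical-paper}, whose content is precisely the block structure you derive from $\Lambda$, $\Omega$, or the trek rule). For the second claim, the paper does not invoke Theorem~\ref{thm:directed-mldegree} or any gradient/normal-space analysis at all; it simply observes that since $\Sigma^{-1}=\operatorname{diag}(\Sigma_1^{-1},\Sigma_2^{-1})$, one has $\log\det\Sigma=\log\det\Sigma_1+\log\det\Sigma_2$ and $\tr(S\Sigma^{-1})=\tr(S_{UU}\Sigma_1^{-1})+\tr(S_{WW}\Sigma_2^{-1})$, so $\ell_n(\Sigma,S)=\ell_n(\Sigma_1,S_{UU})+\ell_n(\Sigma_2,S_{WW})$ with the off-diagonal blocks of $S$ dropping out entirely. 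Combined with the product description of $\Theta(G)$, maximizing over $\Theta(G)$ splits into two independent maximizations, and the description of $\log\Vor_{\Theta(G)}(\Sigma)$ follows straight from the definition. Your approach via the log-normal spectrahedron is sound---the key identity $(\Sigma^{-1}S\Sigma^{-1})_{UU}=\Sigma_1^{-1}S_{UU}\Sigma_1^{-1}$ is exactly what you need---but the paper's additive split is shorter and does not rely on ML degree one, so it would apply verbatim to any Gaussian model that factors as a product of two independent blocks.
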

\begin{proof}
The first statement is a direct consequence of Proposition 3.6 in \cite{seth-graphical-paper}. 
The second statement follows from the observation that $\ell_n(\Sigma, S) = \ell_n(\Sigma_{UU}, S_{UU}) + \ell_n(\Sigma_{WW}, S_{WW})$.
\end{proof}
\section{Covariance models}

Let $A\in\PD_m$ and let $\mathcal{L}$ be a linear subspace of $\text{Sym}(\RR^m)$. Then $A+\mathcal{L}$ is an affine subspace of $\text{Sym}(\RR^m)$. Models defined by $\Theta=(A+\mathcal{L})\cap \PD_m$ are called \textit{covariance models}. For such models, a necessary condition for $\Sigma\in\PD_m$ to be a local maximum of the log-likelihood function is $\Sigma-A\in \mathcal{L}$ and $K-KSK\in\mathcal{L}^{\perp}$ where $K = \Sigma^{-1}$; see  \cite{STZ20} and \cite{amendola2021CorrelationModels}.
From this, one can describe the log-normal spectrahedron at $\Sigma$ in the model $\Theta$ explicitly. 
\begin{prop}
The log-normal spectrahedron $\mathcal{K}_{\Sigma}(\Theta)$ at $\Sigma \in \Theta$ on a covariance model  $\Theta = (A + \mathcal{L}) \cap \PD_m$ is equal to $\N_{\Sigma}\Theta \cap \PD_m$ where
$$\N_{\Sigma}\Theta=\{S\in\text{Sym}(\RR^m): K-KSK\in\mathcal{L}^{\perp}\}.$$
\end{prop}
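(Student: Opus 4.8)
The plan is to unwind the definition of the log-normal matrix space $\N_\Sigma\Theta$ given in Section 2 --- that it consists of those symmetric $S$ for which $\nabla_\Sigma\ell_n(\Sigma,S)$ lies in the normal space of $\Theta$ at $\Sigma$ --- and to check that this matches the stated necessary condition for a local maximum. First I would identify the tangent and normal spaces. Since $\Theta = (A+\mathcal{L})\cap\PD_m$ is an open subset of the affine space $A+\mathcal{L}$, and $\Sigma$ is a nonsingular point of the model (as assumed throughout the paper), the tangent space $T_\Sigma\Theta$ is exactly $\mathcal{L}$. Hence, with respect to the trace pairing $\langle X,Y\rangle=\tr(XY)$ on $\mathrm{Sym}(\RR^m)$, the normal space of $\Theta$ at $\Sigma$ is $\mathcal{L}^{\perp}$. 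Here it is important that $\nabla_\Sigma\ell_n$ is understood as an element of $\mathrm{Sym}(\RR^m)$ paired via the trace form, so that ``lies in the normal space'' literally means ``orthogonal to $\mathcal{L}$''.

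Next I would compute the gradient of $\ell_n(\Sigma,S)=-\tfrac{n}{2}\log\det\Sigma-\tfrac{n}{2}\tr(S\Sigma^{-1})$ with respect to $\Sigma$. Using $\mathrm{d}\log\det\Sigma=\tr(K\,\mathrm{d}\Sigma)$ and $\mathrm{d}(\Sigma^{-1})=-\Sigma^{-1}(\mathrm{d}\Sigma)\Sigma^{-1}$, together with cyclicity of the trace, one gets $\mathrm{d}\ell_n=-\tfrac{n}{2}\tr\big((K-KSK)\,\mathrm{d}\Sigma\big)$, where $K=\Sigma^{-1}$, so that $\nabla_\Sigma\ell_n(\Sigma,S)=-\tfrac{n}{2}(K-KSK)$, which is symmetric since both $K$ and $KSK$ are. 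Therefore $\Sigma$ is a critical point of $\ell_n(\cdot,S)$ on $\Theta$ precisely when $\tr\big((K-KSK)V\big)=0$ for all $V\in\mathcal{L}$, i.e. when $K-KSK\in\mathcal{L}^{\perp}$. This gives $\N_\Sigma\Theta=\{S\in\mathrm{Sym}(\RR^m): K-KSK\in\mathcal{L}^{\perp}\}$, and intersecting with $\PD_m$ yields the claim, since by definition $\mathcal{K}_\Sigma(\Theta)=\N_\Sigma\Theta\cap\PD_m$.

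As a consistency check one can note that $S\mapsto K-KSK$ is affine in $S$ (the matrix $K$ depends only on $\Sigma$), so $\N_\Sigma\Theta$ is indeed an affine subspace, as asserted earlier, and the description recovers the necessary condition $\Sigma-A\in\mathcal{L}$, $K-KSK\in\mathcal{L}^{\perp}$ quoted from \cite{STZ20} and \cite{amendola2021CorrelationModels} --- with the first condition being a constraint on the point $\Sigma$, which is given to lie in the model, and playing no role in the description of $\N_\Sigma\Theta$ as a set of sample covariances $S$. The argument is essentially a one-line computation; the only delicate point is the tangent/normal space identification, which requires invoking that $\Sigma$ is interior to $\PD_m$ so that $\Theta$ is locally the full affine space $A+\mathcal{L}$ near $\Sigma$, and a little care with the matrix differential so that the gradient is expressed in the trace pairing.
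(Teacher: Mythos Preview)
Your argument is correct and follows the same approach as the paper: the proposition is stated there without proof, immediately after the necessary condition $K-KSK\in\mathcal{L}^{\perp}$ is quoted from \cite{STZ20} and \cite{amendola2021CorrelationModels}, with only the remark ``From this, one can describe the log-normal spectrahedron at $\Sigma$ in the model $\Theta$ explicitly.'' You have simply supplied the gradient computation and the tangent/normal space identification that the paper outsources to those references.
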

The log-likelihood function $\ell_n(\Sigma, S)$ is generally not concave on a covariance model, and the
maximum likelihood degree of such models can be arbitrarily high \cite{STZ20}. Therefore, in general, the logarithmic Voronoi cells are strictly contained in log-normal spectrahedra. 
On the other hand, we would like to point out the following interesting result.
\begin{prop} \cite[Proposition 3.1]{ZUR17} Let $\Theta \subseteq \PD_m$ be a Gaussian covariance model and let $S \in \PD_m$. The log-likelihood function $\ell_n(\Sigma, S)$ is 
strictly concave on the convex set $ \Delta_{2S} = \{ \Sigma \in \PD_m\, : \, 0 \prec \Sigma \prec 2S\}$, and hence it is strictly concave  on $\Delta_{2S} \cap \Theta$.
\end{prop}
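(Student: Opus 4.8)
The plan is to prove the stronger statement that the Hessian of $\ell_n(\,\cdot\,,S)$ is negative definite at every point of $\Delta_{2S}$. Since $\Delta_{2S}$ is the intersection of the open cone $\PD_m$ with the open convex set $\{\Sigma : 2S-\Sigma\succ 0\}$, it is open and convex, so a negative definite Hessian on $\Delta_{2S}$ gives strict concavity there; and the restriction of a strictly concave function to the convex subset $\Delta_{2S}\cap\Theta$ is again strictly concave. The factor $n/2>0$ plays no role, so I work with $f(\Sigma):=-\log\det\Sigma-\tr(S\Sigma^{-1})$, which is smooth on $\PD_m$.

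First I would compute the second differential of $f$ in a symmetric direction $H$ from the standard identities $D(\log\det\Sigma)[H]=\tr(\Sigma^{-1}H)$ and $D(\Sigma^{-1})[H]=-\Sigma^{-1}H\Sigma^{-1}$; differentiating a second time (the term $\tr(S\Sigma^{-1}H\Sigma^{-1})$ picks up a contribution from each of its two $\Sigma^{-1}$ factors) yields, writing $K=\Sigma^{-1}$,
$$D^2 f(\Sigma)[H,H]=\tr(KHKH)-2\,\tr(KSKHKH).$$
Next I would simplify by congruence. Setting $\widetilde H := \Sigma^{-1/2}H\Sigma^{-1/2}$ and $\widetilde S := \Sigma^{-1/2}S\Sigma^{-1/2}$, which are symmetric with $\widetilde H=0$ iff $H=0$, cyclicity of the trace gives
$$D^2 f(\Sigma)[H,H]=\tr(\widetilde H^{\,2})-2\,\tr(\widetilde S\,\widetilde H^{\,2})=-\tr\big((2\widetilde S-I)\,\widetilde H^{\,2}\big).$$

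Now I would use the hypothesis. Conjugating $2S-\Sigma\succ 0$ by the invertible matrix $\Sigma^{-1/2}$ shows that $\Sigma\prec 2S$ is equivalent to $2\widetilde S - I\succ 0$. Hence $(2\widetilde S-I)^{1/2}$ exists, and with $N:=(2\widetilde S-I)^{1/2}\widetilde H$ we have
$$\tr\big((2\widetilde S-I)\,\widetilde H^{\,2}\big)=\tr\big(\widetilde H(2\widetilde S-I)\widetilde H\big)=\tr(N^TN)\;\ge\;0,$$
with equality precisely when $N=0$, i.e. when $\widetilde H=0$ (as $(2\widetilde S-I)^{1/2}$ is invertible), i.e. when $H=0$. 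Therefore $D^2 f(\Sigma)[H,H]<0$ for every nonzero symmetric $H$, which is the required negative definiteness; strict concavity on $\Delta_{2S}$, and hence on $\Delta_{2S}\cap\Theta$, follows.

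I do not expect a real obstacle here — the argument is a short matrix-calculus computation followed by a congruence reduction. The two places to be careful are the second differentiation of $\tr(S\Sigma^{-1}H\Sigma^{-1})$ (both $\Sigma^{-1}$ factors vary) and the observation that a nonzero symmetric $H$ need not be invertible, so one must phrase the final positivity as the squared Frobenius norm $\tr(N^TN)$ rather than appeal to positive definiteness of $\widetilde H^{\,2}$. A scalar sanity check ($m=1$: $f(\sigma)=-\log\sigma-s/\sigma$, $f''(\sigma)=\sigma^{-3}(\sigma-2s)<0$ exactly for $0<\sigma<2s$) confirms both the sign and the threshold $2S$.
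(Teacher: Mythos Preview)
Your proof is correct. Note, however, that the paper does not give its own proof of this proposition: it is quoted from \cite[Proposition 3.1]{ZUR17} without argument. Your Hessian computation followed by the congruence $\widetilde H=\Sigma^{-1/2}H\Sigma^{-1/2}$, $\widetilde S=\Sigma^{-1/2}S\Sigma^{-1/2}$ reducing the sign question to positive definiteness of $2\widetilde S-I$ is exactly the standard route (and is the argument in \cite{ZUR17}); the scalar sanity check and the care with the Frobenius-norm formulation when $H$ is singular are both appropriate.
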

This proposition immediately implies the following.
\begin{cor}
Let $\Theta \subseteq \PD_m$ be a Gaussian covariance model and let $\Sigma \in \Theta$. 
Then we have the following containments:
\begin{align}\label{two-containments}
    \{S\in\mathcal{K}_{\Theta}(\Sigma):0\prec\Sigma\prec 2S\}\subseteq\log\Vor_{\Theta}(\Sigma)\subseteq \mathcal{K}_{\Theta}(\Sigma).
\end{align}
\end{cor}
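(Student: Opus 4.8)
The plan is to derive the two containments in~(\ref{two-containments}) directly from the previous proposition together with Proposition~\ref{prop:logVoronoi-in-log-normal}. The right-hand containment $\log\Vor_{\Theta}(\Sigma) \subseteq \mathcal{K}_{\Theta}(\Sigma)$ is nothing new: it is exactly the statement of Proposition~\ref{prop:logVoronoi-in-log-normal}, valid for any Gaussian model, so there is nothing to prove there. The work is entirely in the left-hand containment.

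For the left-hand containment, I would fix $S \in \mathcal{K}_{\Theta}(\Sigma)$ with $0 \prec \Sigma \prec 2S$ and show $S \in \log\Vor_{\Theta}(\Sigma)$, i.e. that $\Sigma$ is a global maximizer of $\ell_n(\,\cdot\,, S)$ on $\Theta$. First, since $S \in \mathcal{K}_{\Theta}(\Sigma)$, the point $\Sigma$ satisfies the stationarity condition $K - KSK \in \mathcal{L}^\perp$ (with $K = \Sigma^{-1}$), so $\Sigma$ is a critical point of $\ell_n(\,\cdot\,, S)$ restricted to the affine slice $(A+\mathcal{L}) \cap \PD_m = \Theta$. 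Next, by the cited Proposition~\cite[Prop. 3.1]{ZUR17}, $\ell_n(\,\cdot\,, S)$ is strictly concave on $\Delta_{2S} \cap \Theta$, and by hypothesis $\Sigma$ lies in this set. A critical point of a strictly concave function on a convex set is its unique maximizer over that set; hence $\Sigma$ maximizes $\ell_n(\,\cdot\,, S)$ over $\Delta_{2S} \cap \Theta$.

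The remaining — and only genuinely delicate — step is to upgrade this \emph{local} (or rather, restricted-to-$\Delta_{2S}$) maximality to \emph{global} maximality over all of $\Theta$, which is what membership in the logarithmic Voronoi cell requires. The key analytic fact is that $\ell_n(\,\cdot\,, S)$ is a concave function on all of $\PD_m$ in the sense that $-\log\det\Sigma - \tr(S\Sigma^{-1})$, viewed through $K \mapsto \log\det K - \tr(SK)$, is concave in $K$ and $\Theta$ corresponds under $\Sigma \mapsto \Sigma^{-1}$ to... but this is exactly the subtlety: $\Theta$ is affine in $\Sigma$, not in $K$, so concavity in $K$ does not transfer. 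Instead I would argue as follows: $\ell_n(\,\cdot\,, S)$ as a function of $\Sigma$ on $\PD_m$ is \emph{not} concave globally, but along any line segment in $\PD_m$ the function $t \mapsto \ell_n(\Sigma_0 + t\,\Delta, S)$ has a second derivative that changes sign exactly when $\Sigma$ crosses the locus related to $2S$; concretely, on the region $\Sigma \prec 2S$ it is concave and on $\Sigma \succ 2S$ behavior can differ. The clean route is to observe that any critical point of $\ell_n(\,\cdot\,,S)$ on $\Theta$ lying in $\Delta_{2S}$ must be the \emph{unique} critical point in $\Delta_{2S} \cap \Theta$ (by strict concavity) and moreover is a global max because $\ell_n \to -\infty$ on the boundary of $\Delta_{2S}\cap\Theta$ towards $\Sigma \to 0$ and the function cannot have a larger value outside $\Delta_{2S}$ — this last claim is what one must pin down, and it follows from the standard fact (see \cite{ZUR17}) that the MLE, when it exists, is attained in $\Delta_{2S}$.

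I expect the main obstacle to be precisely this global-versus-local issue: asserting that a critical point of $\ell_n(\,\cdot\,,S)$ in $\Delta_{2S}\cap\Theta$ is automatically the global maximizer over $\Theta$. The honest resolution is that \cite[Prop. 3.1]{ZUR17} (or the surrounding discussion in that paper) already establishes that whenever the MLE over $\Theta$ exists it lies in $\Delta_{2S}$, so a point $\Sigma \in \Delta_{2S} \cap \Theta$ satisfying the stationarity equations — being the unique critical point there by strict concavity, and the MLE being a critical point forced into $\Delta_{2S}$ — must coincide with the MLE. Thus $\Sigma = \Phi(S)$ and $S \in \log\Vor_{\Theta}(\Sigma)$, completing the proof. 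I would phrase the write-up to lean on the cited proposition for both the strict concavity and the containment of the optimizer in $\Delta_{2S}$, keeping the argument to a few lines.
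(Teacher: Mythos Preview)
Your approach matches the paper's: the paper simply writes ``This proposition immediately implies the following'' and gives no further argument, so both of you derive the corollary from \cite[Prop.~3.1]{ZUR17} together with Proposition~\ref{prop:logVoronoi-in-log-normal}. You are more careful than the paper in explicitly flagging the local-to-global issue --- that strict concavity on $\Delta_{2S}\cap\Theta$ alone only yields a maximum over $\Delta_{2S}\cap\Theta$, not over all of $\Theta$ --- and your resolution, invoking the companion fact from \cite{ZUR17} that any global maximizer must lie in $\Delta_{2S}$, is the correct way to close that gap; the paper's ``immediately'' quietly absorbs exactly this point. The middle portion of your write-up, where you start to analyze concavity in $K$ versus $\Sigma$ and second derivatives along line segments, is an unnecessary detour and should be cut from a final version: the argument needs only (i) $\Sigma$ is a critical point since $S\in\mathcal{K}_\Theta(\Sigma)$, (ii) strict concavity on $\Delta_{2S}\cap\Theta$ makes it the unique critical point there, and (iii) the global optimum over $\Theta$ lies in $\Delta_{2S}$, hence equals $\Sigma$.
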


In general, both containments may be strict, as demonstrated in the next example.
\begin{example} \label{ex:elliptope}
Consider the covariance model  given by
$$\Theta=\left\{\Sigma\in\PD_3:\Sigma = \begin{pmatrix} 1 & x & z \\ x & 1 & y \\ z & y & 1 \end{pmatrix} \right\}.$$ 
This is an unrestricted correlation model. Its 
ML degree is $15$. We can represent each matrix $\Sigma\in\Theta$ by the triple $(\Sigma_{12},\Sigma_{23},\Sigma_{13})$. To see that the two containments in (\ref{two-containments}) are strict, first consider the matrix $\Sigma'=(1/2,1/3,1/4)\in\Theta$, and let $$S=\left(\begin{array}{rrr}
1211/4560 & -217/3420 & 1/30 \\
-217/3420 & 827/2565 & 1/9 \\
1/30 & 1/9 & 1
\end{array}\right)\in\mathcal{K}_{\Theta}(\Sigma').$$  
The log-likelihood function $\ell_n(\Sigma,S)$ has $15$ critical points, three of which are real. The real points are given numerically by 
$$\{(1/2,1/3,1/4),(-0.73841,0.213623,-0.0580265),(0.182141,0.316592,0.190067)\}.$$
The values of the log-likelihood function are, respectively
\begin{align*}
    &-1.53844955693696,\\
    &-1.24750351572487,\\
    &-1.55375020617405.
\end{align*}
We see that the maximum is achieved at the second point, meaning $S\notin\log\Vor_{\Theta}(\Sigma')$. This shows that the second containment in (\ref{two-containments}) is strict. To see that the first containment is strict, let
$$S=\left(\begin{array}{rrr}
813/304 & 103/76 & 1/2 \\
103/76 & 85/57 & 1/3 \\
1/2 & 1/3 & 1/3
\end{array}\right)\in\mathcal{K}_{\Theta}(\Sigma').$$
The matrix $2S-\Sigma'$ is not positive definite. However, $\ell_n(\Sigma,S)$ has only one real critical point, namely $\Sigma'$. Thus $S\in\log\Vor_{\Theta}(\Sigma')$, which shows that the first containment is also strict.
\end{example}

\subsection{Bivariate correlation model}
A \textit{bivariate correlation model} is an affine covariance model given parametrically as
$$\Theta=\left\{\Sigma_x:=\begin{pmatrix}1&x\\x&1\end{pmatrix}: x\in(-1, 1)\right\}.$$
Maximum likelihood estimation of this model has been studied extensively in \cite{amendola2021CorrelationModels}. In this section we give an explicit description of its logarithmic Voronoi cells and show that they are semialgebraic sets. This is extremely surprising. As the development below will demonstrate, the potential constraints which define 
the boundary of logarithmic Voronoi cells of these one-dimensional models are very complicated.
In particular, they are not algebraic. Nevertheless, one recovers a semi-algebraic description.

Given a sample covariance matrix $S$, the derivative of the log-likelihood function $\ell_n(\Sigma, S)$ with respect to $x$ is $\frac{2}{(1-x^2)^2}\cdot f(x)$ where
$$f(x)=x(x^2-1)-S_{12}(1+x^2)+x (S_{11}+S_{22}).$$ This polynomial has at least one real root in the interval $(-1,1)$, which corresponds to a positive definite covariance matrix in the model. This tells us that the MLE always exists, and hence the logarithmic Voronoi cells fill the cone $\PD_2$. Letting $a=(S_{11}+S_{22})/2$ and $b=S_{12}$, the polynomial $f$ can be re-written as
$f(x)=x^3-bx^2-x(1-2a)-b.$ This polynomial has either one or three real roots in the interval $(-1,1)$. In the first case, there is a unique positive definite matrix that appears as a critical point when optimizing $\ell_n(\Sigma, S)$. In the second case, there are three possible positive definite critical points. As shown in \cite{amendola2021CorrelationModels}, the latter happens if and only if $\Delta_f(b,a)>0$ and $a<1/2$, where 
$$\Delta_f(b,a)=-4[b^4-(a^4+8a-11)b^2+(2a-1)^3]$$
is the discriminant of $f$.

Fix $c\in(-1,1)$. We wish to compute the logarithmic Voronoi cell at $\Sigma_c$. Note that for a sample covariance matrix $S$ to have $\Sigma_c$ as a critical point, $c$ must be a root of $f(x)$. Substituting $c$ for $x$ in $f$, we get an equation $f(c)=0$ in $a$ and $b$. From this equation, we may express $a$ in terms of $b$ and $c$:
\begin{align}\label{relationship-between-a&b}
    a=\frac{b c^{2} - c^{3} + b + c}{2 \, c}.
\end{align}
Only $S\in\PD_2$ that satisfy this equation will have $\Sigma_c$ as a critical point when maximizing $\ell_n(\Sigma, S)$. If for such $S$ we have $\Delta_f(b,a)\leq0$ or $a\geq1/2$, then $S$ has $\Sigma_c$ as the MLE and thus $S\in\log\Vor_{\Theta}(\Sigma_c)$. If $\Delta_f(b,a)>0$ and $a<1/2$, then we must compare the value that $\ell_n$ takes on $\Sigma_c$ to the values that it takes on the two matrices $\Sigma_1,\Sigma_2$ corresponding to the other two real roots of $f(x)$, for the fixed $a$ and $b$. Given the relationship between $a$ and $b$ as in (\ref{relationship-between-a&b}) we find all three roots of $f(x)$ in terms of $b$ and $c$. They are
\begin{align*}
    c_1=\frac{b c - c^{2} - \sqrt{b^{2} c^{2} - 2 \, b c^{3} + c^{4} - 4 \, b c}}{2 \, c}\\
    c_2=\frac{b c - c^{2} + \sqrt{b^{2} c^{2} - 2 \, b c^{3} + c^{4} - 4 \, b c}}{2 \, c}
\end{align*}
and, of course, $c$ itself.

Let $$S_{b,k}=\left(\begin{array}{rr}
k & b \\
b & 2a-k
\end{array}\right)\succ 0, \;\;\; 0<k<2a $$
denote a general matrix in $\PD_2$ that has $\Sigma_c$ as a critical point when computing the MLE. In particular, the relation (\ref{relationship-between-a&b}) is satisfied. This set of matrices forms the log-normal spectrahedron $\mathcal{K}_{\Theta}(\Sigma_c)$ of $\Sigma_c$ .

\begin{theorem}\label{log}
Let $\Theta$ be the bivariate correlation model and let $\Sigma_c\in\Theta$. If $c>0$, then $\log\Vor_{\Theta}(\Sigma_c)=\{S_{b,k} \in \mathcal{K}_{\Theta}(\Sigma_c):b\geq 0\}$. If $c<0$, then $\log\Vor_{\Theta}(\Sigma_c)=\{S_{b,k}\in\mathcal{K}_{\Theta}(\Sigma_c):b\leq 0\}$. If $c=0$, then $\log\Vor_{\Theta}(\Sigma_c)=\{\diag(k,2a-k): a\geq 1/2, 0<k<2a\}$. In particular, logarithmic Voronoi cells of $\Theta$ are semi-algebraic sets.
\end{theorem}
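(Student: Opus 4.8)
The plan is to reduce the statement to a comparison of the value of $\ell_n$ at $\Sigma_c$ against its value at the two other real roots $c_1,c_2$ of $f$, and to show that this comparison is governed by the sign of $b$ alone. First I would set up the objects: fix $c\in(-1,1)$, take a matrix $S_{b,k}\in\mathcal{K}_\Theta(\Sigma_c)$ so that (\ref{relationship-between-a&b}) holds, and write out $\ell_n(\Sigma_x,S_{b,k}) = -\tfrac n2\log(1-x^2) - \tfrac n2\cdot\tfrac{(S_{11}+S_{22}) - 2xS_{12}}{1-x^2} = -\tfrac n2\log(1-x^2) - n\cdot\tfrac{a-bx}{1-x^2}$. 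Since $\ell_n$ depends on $S_{b,k}$ only through $a$ and $b$, and $a$ is determined by $b$ and $c$ via (\ref{relationship-between-a&b}), the whole comparison is a function of $b$ and $c$ only; in particular the value of $k$ is irrelevant, which already explains why the logarithmic Voronoi cell is cut out of $\mathcal{K}_\Theta(\Sigma_c)$ by a condition on $b$.

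Next, following Proposition \ref{prop:logVoronoi-in-log-normal}, $S_{b,k}\in\log\Vor_\Theta(\Sigma_c)$ iff $\ell_n$ at $x=c$ is at least as large as at $x=c_1$ and at $x=c_2$ (when those are real and in $(-1,1)$, i.e.\ when $\Delta_f>0$ and $a<1/2$; otherwise $\Sigma_c$ is the unique critical point and $S_{b,k}$ is automatically in the cell). So the core computation is the difference $D_i(b,c) := \ell_n(\Sigma_c,S_{b,k}) - \ell_n(\Sigma_{c_i},S_{b,k})$ for $i=1,2$. Substituting the closed forms for $a$, $c_1$, $c_2$ in terms of $b$ and $c$, I would simplify $D_i$; because $c,c_1,c_2$ are the three roots of the cubic $f(x)=x^3-bx^2-(1-2a)x-b$, there are Vieta relations ($c+c_1+c_2 = b$, $c\,c_1c_2 = b$, $cc_1+cc_2+c_1c_2 = -(1-2a)$) that should collapse the algebra substantially. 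The claim to extract is: for $c>0$ the quantity $D_1(b,c)$ and $D_2(b,c)$ are both $\ge 0$ exactly when $b\ge 0$ (and for $c<0$, exactly when $b\le 0$), on the relevant parameter region where all three roots are real and in $(-1,1)$.

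The main obstacle is exactly this last step: the differences $D_i$ involve both a logarithmic term $\log\frac{1-c^2}{1-c_i^2}$ and a rational term, so they are transcendental, not polynomial, in $b$ — which is the point the authors flag as ``extremely surprising.'' I would handle it by studying $D_i$ as a function of $b$ for fixed $c>0$: check the boundary/degenerate cases ($b=0$, and the locus $\Delta_f(b,a)=0$ where $c_1=c_2$ or a root leaves $(-1,1)$, where two of the critical values coincide so $D_i=0$), then show $D_i$ does not change sign in the interior of each region by a monotonicity argument in $b$ (differentiating $D_i$ with respect to $b$, using $\partial a/\partial b = \tfrac{c^2+1}{2c}$ from (\ref{relationship-between-a&b}), and the envelope/critical-point identity $\partial_x\ell_n(\Sigma_x,S)\big|_{x=c_i}=0$ to kill the terms coming from $\partial c_i/\partial b$). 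This is where symmetry helps: the substitution $x\mapsto -x$, $b\mapsto -b$, $c\mapsto -c$ is an involution of the whole setup that swaps the two cases, so it suffices to treat $c>0$ and then invoke symmetry for $c<0$; and the case $c=0$ forces $b=0$ in (\ref{relationship-between-a&b}) unless one passes to the limit, leaving only the diagonal matrices with $a\ge 1/2$, which is exactly the stated cell. Once the sign of each $D_i$ is pinned to the sign of $b$, the three descriptions follow, and since $\{b\ge 0\}$, $\{b\le 0\}$, and $\{a\ge 1/2\}$ are polynomial inequalities intersected with the spectrahedron $\mathcal{K}_\Theta(\Sigma_c)$, each logarithmic Voronoi cell is semi-algebraic, completing the proof.
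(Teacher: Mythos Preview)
Your setup and reduction agree with the paper: $\ell_n(\Sigma_x,S_{b,k})$ depends on $S$ only through $(a,b)$, and with $a$ eliminated via (\ref{relationship-between-a&b}) the whole problem becomes a one-variable comparison in $b$; the symmetry $(x,b,c)\mapsto(-x,-b,-c)$ handles $c<0$; and $c=0$ forces $b=0$ and reduces to the explicit comparison $-2a$ versus $-\log(2a)-1$.

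The divergence is at the heart of the argument. You propose to pin down the sign of $D_i(b)=\ell_n(\Sigma_c)-\ell_n(\Sigma_{c_i})$ by a global monotonicity argument in $b$, using the envelope identity to kill $\partial c_i/\partial b$. The paper does \emph{not} do this. Instead it exploits the general convexity of logarithmic Voronoi cells (Proposition \ref{prop:logVoronoi-convex}) as a structural shortcut: first it shows that for $b\geq \tfrac{c^2-2\sqrt{c^2+1}+2}{c}$ either $\Delta_f\leq 0$ or $a\geq 1/2$, so $\Sigma_c$ is the only real critical point and those $S_{b,k}$ lie in the cell; then it checks directly that at $b=0$ one has $\ell_n(\Sigma_{c_1},S_{0,k})=\ell_n(\Sigma_c,S_{0,k})>\ell_n(\Sigma_{c_2},S_{0,k})$, so $b=0$ is in the cell; convexity then forces all of $\{b\geq 0\}$ into the cell. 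For the reverse inclusion the paper takes the first-order Taylor expansion of $g(b)=D_1(b)$ at $b=0$, observes $g(0)=0$ and $g'(0)<0$, so $D_1<0$ for small $b<0$; convexity again propagates this to all $b<0$.

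Your route is plausible but leaves the hardest step unverified: after the envelope simplification, $\partial D_i/\partial b$ still contains $c_i(b)$ nontrivially (one gets $\tfrac{1}{2c}$ from the $x=c$ term, but the $x=c_i$ term is $\tfrac{(c^2+1)/(2c)-c_i}{1-c_i^2}$), and you have not checked its sign on the full region. The paper's convexity trick is precisely what lets one avoid that global computation, replacing it with a single evaluation at $b=0$ plus a local derivative. Also, your description of the boundary $\Delta_f=0$ is slightly off: the discriminant factors as a square times a linear-in-$b$ factor, so along one component $c$ coincides with $c_1$ or $c_2$ (giving $D_i=0$), but along the other it is $c_1=c_2$ that collide, where $D_1=D_2$ need not vanish.
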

\begin{proof}
First, suppose that $c>0$. Since we only consider the positive definite matrices $S_{b,k}$, we are working in the cone $a>|b|$. This gives us the restriction $b > c(c-1)/(c+1)$.
Note that
$$\Delta_f(b,c)=\frac{{\left(b^{2} c - 2 \, b c^{2} - 4 \, b + c^{3}\right)} {\left(b c^{2} - 2 \, c^{3} - b\right)}^{2}}{c^{3}}.$$
Thus, $\Delta_f\leq 0$ if and only if 
\begin{align}\label{ineqs-for-neg-discrim}
    \frac{c^{2} - 2 \, \sqrt{c^{2} + 1} + 2}{c}\leq b\leq \frac{c^{2} + 2 \, \sqrt{c^{2} + 1} + 2}{c}.
\end{align}
Moreover, since $a=\frac{b c^{2} - c^{3} + b + c}{2 \, c}$, we also have $a\geq 1/2$ if and only if  $b\geq \frac{c^3}{c^2+1}$. Since for $c>0$, we always have $$\frac{c^3}{c^2+1} \leq \frac{c^{2} + 2 \, \sqrt{c^{2} + 1} + 2}{c},$$
it follows that 
$$\left\{S_{b,k}\in \mathcal{K}_{\Theta}(\Sigma_c):  b\geq \frac{c^{2} - 2 \, \sqrt{c^{2} + 1} + 2}{c}\right\}\subseteq \log\Vor_{\Theta}(\Sigma_c).$$
Now, suppose $b
\leq\frac{c^{2} - 2 \, \sqrt{c^{2} + 1} + 2}{c}$. Such sample covariance matrices $S_{b,k}$ will have three positive definite roots when optimizing $\ell_n(\Sigma, S)$, namely $\Sigma_{c_1}$, $\Sigma_{c_2}$, and $\Sigma_{c}$. In order for such matrix $S_{b,k}$ to be in $\log\Vor_{\Theta}(\Sigma_c)$, it has to be the case that
$$\ell_n(\Sigma_c, S)\geq \ell_n(\Sigma_{c_i}, S)\text{ for }i=1,2.$$
A computation (in SAGE \cite{sagemath}) shows that both inequalities above are inequalities in $b$ only. The only constraints on $k$ are given by the positive definiteness of $S$. The values of the log-likelihood function are
\begin{align*}
   &\ell_n(\Sigma_{c_1}, S_{b,k})=\mtiny{-\frac{1}{D}\Bigg[{4 \, b c^{2}  + {\left(2 \, b c^{2} - c^{3} - {\left(b^{2} - 2\right)} c + 2 \, b\right)} \log\left(\frac{2 \, b c^{2} - c^{3} - {\left(b^{2} - 2\right)} c + \sqrt{b^{2} c^{2} - 2 \, b c^{3} + c^{4} - 4 \, b c} {\left(b - c\right)} + 2 \, b}{2 \, c}\right)}}\\
    &\mtiny{{+ \sqrt{b^{2} c^{2} - 2 \, b c^{3} + c^{4} - 4 \, b c} {\left({\left(b - c\right)} \log\left(\frac{2 \, b c^{2} - c^{3} - {\left(b^{2} - 2\right)} c + \sqrt{b^{2} c^{2} - 2 \, b c^{3} + c^{4} - 4 \, b c} {\left(b - c\right)} + 2 \, b}{2 \, c}\right) + 2 \, b\right)} + 2 \, b- 2 \, c^{3}- 2 \, {\left(b^{2} - 1\right)} c }\Bigg]}\\
    &\ell_n(\Sigma_{c_2}, S_{b,k})=\mtiny{-\frac{1}{D}{\Bigg[4 \, b c^{2} + {\left(2 \, b c^{2} - c^{3} - {\left(b^{2} - 2\right)} c + 2 \, b\right)} \log\left(\frac{2 \, b c^{2} - c^{3} - {\left(b^{2} - 2\right)} c - \sqrt{b^{2} c^{2} - 2 \, b c^{3} + c^{4} - 4 \, b c} {\left(b - c\right)} + 2 \, b}{2 \, c}\right)}}\\
    &\mtiny{{ - \sqrt{b^{2} c^{2} - 2 \, b c^{3} + c^{4} - 4 \, b c} {\left({\left(b - c\right)} \log\left(\frac{2 \, b c^{2} - c^{3} - {\left(b^{2} - 2\right)} c - \sqrt{b^{2} c^{2} - 2 \, b c^{3} + c^{4} - 4 \, b c} {\left(b - c\right)} + 2 \, b}{2 \, c}\right) + 2 \, b\right)} + 2 \, b - 2 \, c^{3} - 2 \, {\left(b^{2} - 1\right)} c \Bigg]}}\\
    &\ell_n(\Sigma_{c}, S_{b,k})=\mtiny{-\frac{c \log\left(-c^{2} + 1\right) + b + c}{c}},
\end{align*}
where $D=2 \, b c^{2} - c^{3} - {\left(b^{2} - 2\right)} c + \sqrt{b^{2} c^{2} - 2 \, b c^{3} + c^{4} - 4 \, b c} {\left(b - c\right)} + 2 \, b$. 
Note that for fixed $c$, the last function is linear in $b$, with the negative slope $-1/c$. At $b=0$, we always have $\ell_n(\Sigma_{c_1}, S_{0,k})=\ell_n(\Sigma_{c}, S_{0,k})>\ell_n(\Sigma_{c_2}, S_{0,k})$, so $S_{0,k}\in\log\Vor_{\Theta}(\Sigma_c)$ for $0<k<2a$. Since logarithmic Voronoi cells are convex sets by Proposition \ref{prop:logVoronoi-convex}, this means that the containment $\{S_{b,k}\in\mathcal{K}_{\Theta}(\Sigma_c): b\geq 0\}\subseteq \log\Vor_{\Theta}(\Sigma_c)$ holds. For the other containment, let $g(b)=\ell_n(\Sigma_{c_1},S_{b,k})-\ell_n(\Sigma_c,S_{b,k})$ and consider its Taylor expansion $g(b)=g(0)+g'(0)b+\cdots$. Note that $g(0)=0$ and $g'(0)<0$ for all $0<c<1$. Thus, for all $b^*<0$ with $|b^*|$ sufficiently small, the term $g'(0)b^*$ is positive and dominating in the expansion. This means that for such $b^*<0$, we have $\ell_{n}(\Sigma_{c_1}, S_{b^*,k})>\ell_{n}(\Sigma_{c}, S_{b^*,k})$, and so $S_{b^*,k}\notin\log\Vor_{\Theta}(\Sigma_c)$. Thus, $\{S_{b,k}\in\mathcal{K}_{\Theta}(\Sigma_c): b\geq 0\}= \log\Vor_{\Theta}(\Sigma_c)$ by convexity of logarithmic Voronoi cells. The proof for $c<0$ is similar. For $c=0$, we have that $b=0$ and the log-normal spectrahedron is given by $\{S_{a,k}:=\diag(k,2a-k): 0\leq k\leq 2a\}$. The two other critical points, besides $0$, are given by $c_1=\sqrt{1-2a}$ and $c_2=-\sqrt{1-2a}$. These are real if $a \leq 1/2$. In this case, the values of the log-likelihood function are as follows:
$$\ell_n(\Sigma_{c_1},S_{a,k})=\ell_n(\Sigma_{c_2},S_{a,k})=-\log(2a)-1,\;\ell_n(\Sigma_{c},S_{a,k})=-2a.$$
Note that $\ell_n(\Sigma_{c_1},S_{a,k})$ is a monotone decreasing strictly convex function and $\ell_n(\Sigma_{c},S_{a,k})$ is a linear function with slope $-2$, tangent to $\ell_n(\Sigma_{c_1},S_{a,k})$ at $a=1/2$. Thus, the only time $\Sigma_0$ is the MLE in this regime is when $a=1/2$. If $a>1/2$, $c=0$ is the 
only real critical point and gives the MLE. 
We conclude that $\log\Vor_\Theta(\Sigma_0) = 
\{\diag(k, 2a-k) \, : a\geq 1/2, \, 0 < k < 2a\}$.
\end{proof}
\begin{remark}
Note that since the bivariate correlation model is a compact set inside $\PD_2$, its log-normal spectrahedra for general matrices as well as logarithmic Voronoi cells are unbounded. In general, for $0<c<1$, the part of the log-normal spectrahedron at $\Sigma_c$ that is not in the logarithmic Voronoi cell at $\Sigma_c$ is small. This is due to the fact that $c(c-1)/(c+1)$ is a negative number with small magnitude. As $c\to 1$, the logarithmic Voronoi cell converges to the log-normal spectrahedron. In Figure \ref{bivariate-c=1/2}, we plot the logarithmic Voronoi cell at $c=1/2$ as the intersection of the pink log-normal spectrahedron and the blue half-space $b\geq 0$. Similar statement is true of $-1<c<0$.
\end{remark}

\begin{figure}[ht]
    \centering
    \includegraphics[width=0.35\textwidth]{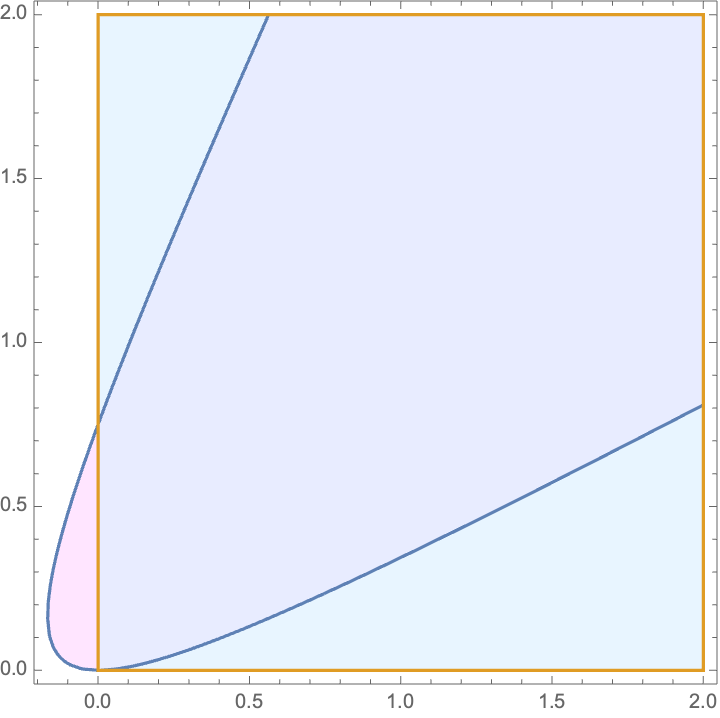} 
    \caption{The logarithmic Voronoi cell at $\Sigma_{1/2}$ for the bivariate correlation model.}
    \label{bivariate-c=1/2}
\end{figure}

\subsection{Equicorrelation models}
An \textit{equicorrelation model} is given by the parameter space
$$\Theta_m=\{\Sigma_x\in\text{Sym}(\RR^m): \Sigma_{ii}= 1, \Sigma_{ij}=x\text{ for }i\neq j, i,j\in[m], x\in\RR\}\cap \PD_m.$$
Note that this model is an instance of the affine covariance model, with $A=I_m$ and $\L=\text{span}_{\RR}\{\boldsymbol{1}\boldsymbol{1}^T-I_m\}$, where $\boldsymbol{1}$ denotes the all-ones vector in $\RR^m$. Note also that $\Theta_2$ is precisely the bivariate correlation model. For a symmetric matrix $\Sigma_x=(1-x)I_m+x\boldsymbol{1}\boldsymbol{1}^T$ to be positive definite, $-\frac{1}{m-1}<x<1$ must hold. 

Given $c\in\RR$ such that $-\frac{1}{m-1}<c<1$, we wish to describe the logarithmic Voronoi cell at $\Sigma_c\in\Theta_m$. Let $S\in\PD_m$ be a sample covariance matrix. Following \cite{amendola2021CorrelationModels}, define the \textit{symmetrized sample covariance matrix} to be the matrix
$$\bar{S}=\frac{1}{m!}\sum_{P\in S_m}PSP^T$$
where $S_m$ denotes the group of all $m\times m$ permutation matrices. Let $\mathcal{N}$ denote the space of all symmetrized sample covariance matrices. Note that for $i,j\in[m]$, we have $\bar{S}_{ii}=a$ and $\bar{S}_{ij}=b$ whenever $i\neq j$. From Lemma 5.2 in \cite{amendola2021CorrelationModels}, we have $\langle S,\Sigma_c^{-1}\rangle=\langle \bar{S},\Sigma_c^{-1}\rangle$, so optimizing $\ell_n(\Sigma,S)$ is equivalent to optimizing $\ell_n(\Sigma,\bar{S})$. Hence, we may fully recover the logarithmic Voronoi cells at $\Sigma_c$ with the matrices $\bar{S}$ for which $\ell_n(\Sigma, \bar{S})$ is maximized at $\Sigma_c$.

From Theorem 5.4 in \cite{amendola2021CorrelationModels}, we know that the ML degree of the equicorrelation model is 3 and the critical points for a general $\bar{S}$ with $\bar{S}_{ii}=a$ and $\bar{S}_{ij}=b$ for $i\neq j$ are given by the points $\Sigma_r$ where $r$ is a root of the cubic
$$f_m(x)=(m-1)x^3+((m-2)(a-1)-(m-1)b)x^2+(2a-1)x-b.$$
Since we are interested in the matrices $\bar{S}$ that have $\Sigma_c$ as a critical point, $c$ must be a root of $f_m(x)$. Then, the equation $f_m(c)=0$ becomes an equation expressing the relationship between $a$ and $b$, namely
$$a = -\frac{{\left(b + 2\right)} c^{2} - c^{3} - {\left({\left(b + 1\right)} c^{2} - c^{3}\right)} m - b - c}{c^{2} m - 2 \, c^{2} + 2 \, c}
.$$
All positive definite matrices $\bar{S}$ satisfying the above relationship are the set $\mathcal{K}_{\Theta_m}(\Sigma_c)\cap \N$. The matrices in this set depend only on the parameter $b$, so we will denote them by $\bar{S}_b$. Not all such matrices may be in the logarithmic Voronoi cell at $\Sigma_c$. The matrices $\bar{S}_b$ for which the discriminant $\Delta_{f,m}(b,a)$ is negative will be in the logarithmic Voronoi cell, since for such points, $f_m$ has only one real root. When $\Delta_{f,m}(b,a)\geq 0$
, the situation is more complicated. The good news is that most such matrices $\bar{S}_b$ satisfying $f_m(c)=0$ will still have only one positive definite critical point in the model, namely $\Sigma_c$. However, some matrices may have two additional critical points. In such cases, we have to evaluate $\ell_n(\Sigma, \bar{S}_b)$ on the other two roots of $f_m(x)$, denoted by $c_1$ and $c_2$. If $\ell_n(\Sigma_c,\bar{S}_b)$ is the largest, then $\bar{S}_b$ would be in the logarithmic Voronoi cell at $\Sigma_c$. Precisely, we have that
$$\log\Vor_{\Theta_m}(\Sigma_c)\cap\N=\{\bar{S}_b\in \mathcal{K}_{\Theta_m}(\Sigma_c)\cap \N:\ell_n(\Sigma_c,\bar{S}_b)\geq \ell_n(\Sigma_{c_i},\bar{S}_b),\; i=1,2\}.$$ For fixed $c$, the two inequalities defining the above set are inequalities in one variable $b$. Thus, the set $\log\Vor_{\Theta_m}(\Sigma_c)\cap\N$ is one-dimensional. We have the following theorem.

\begin{theorem}
Let $\Sigma_c\in \Theta_m$. The logarithmic Voronoi cell at $\Sigma_c$ is given as $$\log\Vor_{\Theta_m}(\Sigma_c)=\{S\in\PD_m: \psi(S)=\bar{S}, \bar{S}\in\mathcal{N}\cap\log\Vor_{\Theta_m}(\Sigma_c)\},$$
where 
$\psi: \PD_m\to\mathcal{N}:S\mapsto \bar{S}.$
\end{theorem}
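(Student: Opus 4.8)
The plan is to use the fact that $\Theta_m$ is invariant under the simultaneous conjugation action of the symmetric group $S_m$, so that the log-likelihood function depends on a sample covariance matrix $S$ only through its symmetrization $\bar S = \psi(S)$.

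First I would record the following invariance identity: for every $\Sigma \in \Theta_m$ and every $S \in \PD_m$ we have $\ell_n(\Sigma, S) = \ell_n(\Sigma, \bar S)$. Indeed, $\ell_n(\Sigma, S) = -\tfrac{n}{2}\log\det\Sigma - \tfrac{n}{2}\tr(S\Sigma^{-1})$, and the first summand does not involve $S$. For the second, note that $\Sigma = (1-x)I_m + x\mathbf{1}\mathbf{1}^T$ for some $x$, so $\Sigma^{-1}$ is again of the form $\alpha I_m + \beta\mathbf{1}\mathbf{1}^T$ and hence $P^T\Sigma^{-1}P = \Sigma^{-1}$ for every permutation matrix $P$. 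Therefore $\tr(PSP^T\Sigma^{-1}) = \tr(S\,P^T\Sigma^{-1}P) = \tr(S\Sigma^{-1})$, and averaging over $P\in S_m$ gives $\tr(\bar S\,\Sigma^{-1}) = \tr(S\Sigma^{-1})$. This is exactly Lemma 5.2 of \cite{amendola2021CorrelationModels}, now applied at every $\Sigma \in \Theta_m$ rather than only at $\Sigma_c$. I would also note that $\bar S \in \PD_m$, being a convex combination of the positive definite matrices $PSP^T$, and that $\bar S \in \mathcal{N}$ by construction.

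Given this, the theorem follows by a short double inclusion. If $S \in \log\Vor_{\Theta_m}(\Sigma_c)$, then $\ell_n(\Sigma_c, S) \geq \ell_n(\Sigma, S)$ for all $\Sigma \in \Theta_m$; substituting the identity on both sides yields $\ell_n(\Sigma_c, \bar S) \geq \ell_n(\Sigma, \bar S)$ for all $\Sigma \in \Theta_m$, and since $\bar S \in \PD_m$ this says precisely that $\psi(S) = \bar S \in \mathcal{N} \cap \log\Vor_{\Theta_m}(\Sigma_c)$, so $S$ lies in the right-hand set. Conversely, if $S \in \PD_m$ with $\psi(S) = \bar S \in \mathcal{N} \cap \log\Vor_{\Theta_m}(\Sigma_c)$, then $\ell_n(\Sigma_c, \bar S) \geq \ell_n(\Sigma, \bar S)$ for all $\Sigma \in \Theta_m$, and the identity transports this back to $\ell_n(\Sigma_c, S) \geq \ell_n(\Sigma, S)$ for all $\Sigma \in \Theta_m$, i.e.\ $S \in \log\Vor_{\Theta_m}(\Sigma_c)$. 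There is no real obstacle here: the only point needing a little care is that the invariance identity must hold at every covariance matrix in the model (equivalently, at all three critical points $\Sigma_c, \Sigma_{c_1}, \Sigma_{c_2}$), but this is immediate from the permutation symmetry of $\Theta_m$. One may optionally add that the theorem exhibits $\log\Vor_{\Theta_m}(\Sigma_c)$ as the full preimage under $\psi$ of the explicit one-dimensional semi-algebraic slice $\mathcal{N} \cap \log\Vor_{\Theta_m}(\Sigma_c)$ described just before the statement.
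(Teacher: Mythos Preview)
Your proof is correct and takes essentially the same approach as the paper: both rest on the identity $\tr(S\Sigma^{-1})=\tr(\bar S\,\Sigma^{-1})$ for every $\Sigma\in\Theta_m$ (the paper writes this as $\langle S,\Sigma_c^{-1}\rangle=\langle \bar S,\Sigma_c^{-1}\rangle$ and invokes Lemma~5.2 of \cite{amendola2021CorrelationModels}), from which the double inclusion is immediate; you have simply spelled out the permutation-invariance argument and the two directions in more detail. One small caveat on your closing optional remark: you call the one-dimensional slice $\mathcal N\cap\log\Vor_{\Theta_m}(\Sigma_c)$ \emph{semi-algebraic}, but the paper does not establish this for general $m$ (only for the bivariate case $m=2$), so that adjective should be dropped.
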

\begin{proof}
This follows from the equality $\langle S,\Sigma_c^{-1}\rangle=\langle \bar{S},\Sigma_c^{-1}\rangle$.
\end{proof}

Note that the pre-image of any symmetrized covariance matrix $\bar{S}$ under $\psi$ has dimension $\binom{m+1}{2}-2$, and so the logarithmic Voronoi cell at any generic $\Sigma_c\in\Theta_m$ has dimension $\binom{m+1}{2}-2+1=\binom{m+1}{2}-1$, i.e. co-dimension 1, as expected.

We also observe that when $m$ increases, the number of matrices $\bar{S}_b$ that have two other positive-definite critical points besides $\Sigma_c$ decreases. Moreover, in statistical practice, such matrices $\bar{S}_b$ are rare, even for small sample sizes \cite{amendola2021CorrelationModels}. This means that for practical purposes, we may say that the logarithmic Voronoi cell at $\Sigma_c\in\Theta_m$ is \textit{approximately} its log-normal spectrahedron.
\subsection{Transcendentality of logarithmic Voronoi cells}

In this paper we have introduced logarithmic Voronoi cells for Gaussian models. In the case of models that are also well-known in algebraic statistics we have proved that the logarithmic Voronoi
cells are spectrahedra with explicit descriptions. These include linear concentration models such as Gaussian models on undirected graphs as well as Gaussian models on DAGs. The spectrahedra we have identified deserve further study. 

The case of bivariate correlation models is quite interesting since they provide the first small instance where logarithmic Voronoi cells need not be semi-algebraic. However, we showed 
that even in this case we get semialgebraicity even though the logarithmic Voronoi cells are 
not equal to the log-normal spectrahedra. The bivariate correlation models fit into a 
larger class of models known as unrestricted correlation models. 
Such a model is given by the parameter space
$$\Theta=\{\Sigma \in\text{Sym}(\RR^m): \Sigma_{ii}= 1, \, i \in [m]\}\cap \PD_m.$$
The ML degree of these models for $m \leq 6$ was computed in \cite{amendola2021CorrelationModels}. The case $m=2$ is the bivariate correlation model whose ML degree is $3$. 

When $m=3$, the model is a compact spectrahedron known as the {\it elliptope} in convex
algebraic geometry literature. We have encountered this model with ML degree $15$ in Example \ref{ex:elliptope}. The logarithmic Voronoi cells of the elliptope are unbounded $3$-dimensional convex sets. We found it quite challenging to give a good description for them besides the one
coming from its definition. We venture the following conjecture. 
\begin{conj} \label{conjecture}
The logarithmic Voronoi cells for general points on the elliptope are not semi-algebraic; in other words, their boundary is defined by transcendental functions. 
\end{conj}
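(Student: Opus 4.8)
The plan is to argue by contradiction via a transcendence theorem. Fix a generic point $\Sigma$ of the elliptope $\Theta\subseteq\PD_3$ and suppose, for contradiction, that $\log\Vor_\Theta(\Sigma)$ is semi-algebraic. (Note that a semi-algebraic set need not be semi-algebraic over $\overline{\mathbb{Q}}$ even when the underlying data is rational, so one cannot hope to reach a contradiction merely by exhibiting a transcendental coordinate of a boundary point; the argument must instead rule out an algebraic \emph{equation} cutting out the boundary.) Since the elliptope is the covariance model with $A=I_3$ and $\mathcal{L}$ the space of symmetric matrices with zero diagonal, $\mathcal{L}^\perp$ is the space of diagonal matrices, so $\N_\Sigma\Theta=\{S:K-KSK\text{ is diagonal}\}$ with $K=\Sigma^{-1}$ is cut out by three generically independent linear equations and is $3$-dimensional; hence $\mathcal{K}_\Theta(\Sigma)=\N_\Sigma\Theta\cap\PD_3$ is a $3$-dimensional spectrahedron. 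By the first containment in~(\ref{two-containments}) the cell $\log\Vor_\Theta(\Sigma)$ is full-dimensional inside $\N_\Sigma\Theta$, and by Example~\ref{ex:elliptope} it is properly contained in $\mathcal{K}_\Theta(\Sigma)$; by~(\ref{eq:difference}) its relative boundary, away from the boundary of the spectrahedron $\mathcal{K}_\Theta(\Sigma)$, lies on the finitely many comparison hypersurfaces $\{S:\ell_n(\Sigma,S)=\ell_n(\Sigma',S)\}$, where $\Sigma'$ runs over the other real positive definite critical points of $\ell_n(\cdot,S)$. Hence there is a two-dimensional real-analytic piece $M$ of this boundary, lying in the interior of $\mathcal{K}_\Theta(\Sigma)$, along which a single analytic branch $\Sigma_1(\cdot)$ of the critical-point map (of ML degree $15$) satisfies $\ell_n(\Sigma,S)=\ell_n(\Sigma_1(S),S)$ and otherwise dominates. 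If $\log\Vor_\Theta(\Sigma)$ were semi-algebraic, $M$ would be contained in its Zariski closure, a real algebraic surface, and $M$ would contain a Euclidean-open subset of an irreducible component $V(F)\subseteq\N_\Sigma\Theta\cong\RR^3$.

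Along $M$, the equation $\ell_n(\Sigma,S)=\ell_n(\Sigma_1(S),S)$ unwinds to
$$ \det\Sigma_1(S)\;=\;(\det\Sigma)\cdot\exp\bigl(\psi(S)\bigr),\qquad \psi(S)\;:=\;\tr\bigl(S\,\Sigma^{-1}\bigr)-\tr\bigl(S\,\Sigma_1(S)^{-1}\bigr), $$
and both $\det\Sigma_1(S)$ and $\psi(S)$ are \emph{algebraic} functions of the entries of $S$, because $\Sigma_1(S)$ solves the polynomial critical equations $(K-KSK)_{ij}=0$ $(i\ne j)$ with $K=\Sigma_1^{-1}$. Restricting to $V(F)$ and complexifying, the displayed identity holds on a Euclidean-open subset of the smooth locus of the irreducible surface $V_{\mathbb{C}}(F)$, hence identically on $V_{\mathbb{C}}(F)$; so $\exp(\psi)$, being equal there to the algebraic function $\det\Sigma_1/\det\Sigma$, is algebraic on $V_{\mathbb{C}}(F)$, while $\psi$ is algebraic on $V_{\mathbb{C}}(F)$. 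By the classical transcendence fact (a special case of Ax's theorem) that the exponential of a non-constant algebraic function on a variety is transcendental over its field of algebraic functions, $\psi$ must be constant on $V_{\mathbb{C}}(F)$, say $\psi\equiv\alpha$; then $\det\Sigma_1\equiv(\det\Sigma)\,e^\alpha=:\kappa$ on $V_{\mathbb{C}}(F)$ as well. In other words, $V(F)$ is a two-dimensional component of the intersection of the level hypersurfaces $\{S:\psi(S)=\alpha\}$ and $\{S:\det\Sigma_1(S)=\kappa\}$.

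The contradiction comes from the independence of these two conditions. Consider the rational map
$$ \Phi\colon \N_\Sigma\Theta\longrightarrow\RR^2,\qquad S\longmapsto\bigl(\psi(S),\ \det\Sigma_1(S)\bigr). $$
If $\Phi$ is dominant, then every fiber has dimension at most $1$, contradicting the previous paragraph; and dominance follows from the Jacobian of $\Phi$ having rank $2$ at a single point of $\N_\Sigma\Theta$. This is a finite symbolic computation: fix a generic rational point $\Sigma$, work with the critical ideal of the $3\times 3$ correlation model, pass to the relevant branch $\Sigma_1$, and evaluate the Jacobian. Carrying this out for each of the finitely many branches that can occur as the active competitor on the boundary, and then passing from the chosen $\Sigma$ to a generic $\Sigma$ by semicontinuity, would complete the proof. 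One should also confirm the nondegeneracies used along the way: that the active part $M$ is genuinely two-dimensional and nonempty for generic $\Sigma$ --- upgrading the single witness matrix $S$ of Example~\ref{ex:elliptope} to an open family, using that the locus where $\Sigma$ fails to be the global maximizer is open in $\mathcal{K}_\Theta(\Sigma)$ --- and that $\psi$ is not identically constant on $\N_\Sigma\Theta$, which would already render the boundary algebraic.

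I expect the dominance/independence step to be the main obstacle. The critical-point map of the elliptope has ML degree $15$, so getting a handle on the algebraic function $\Sigma_1(S)$ and on the determinant and trace pairings built from it is delicate; one must either carry the parameter $\Sigma$ symbolically throughout or argue by semicontinuity from a numerical or one-parameter specialization, and one must be careful to treat every branch that is active on some patch of $M$, not just a single convenient one. The remaining ingredients --- the reduction to a single real algebraic surface, the invocation of the transcendence fact, and the nondegeneracy checks --- are structurally routine by comparison.
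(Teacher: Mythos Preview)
The statement you are attempting to prove is explicitly labeled a \emph{Conjecture} in the paper; the authors provide no proof, only the numerical evidence of Example~\ref{ex:elliptope} and the contrast with the bivariate case of Theorem~\ref{log}. There is therefore no ``paper's own proof'' to compare against --- what you have written is a proposed attack on an open problem.

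As a strategy, the Ax-type reduction is natural and the first half is essentially sound: if a two-dimensional boundary patch $M$ lies inside an irreducible real algebraic surface $V(F)$, then the identity $\det\Sigma_1=(\det\Sigma)\exp(\psi)$, with both sides algebraic over the function field of $V(F)$, forces $\psi$ to be constant on $V(F)$ and hence on $M$. The endgame, however, contains a genuine error. Your sentence ``if $\Phi$ is dominant, then every fiber has dimension at most~$1$'' is false: for a dominant rational map from a threefold to a surface, upper semicontinuity of fiber dimension gives only that the \emph{generic} fiber is one-dimensional, while special fibers may well be two-dimensional. Since the value $(\alpha,\kappa)$ over which your $V(F)$ sits is dictated by the unknown boundary and not chosen generically, a Jacobian-rank check at a single point cannot close the argument. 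What you actually need is that $\Phi$ has \emph{no} two-dimensional fiber --- equivalently, that no irreducible component of any level set $\{\psi=\alpha\}$ has $\det\Sigma_1$ constant on it --- or, phrased more directly, that $\psi$ is non-constant along $M$ itself. Establishing this for a generic $\Sigma$, and for every one of the (up to fourteen) competing branches $\Sigma_1$ that could be active on some two-dimensional patch of the boundary, is the substantive content of the conjecture; it is not addressed here, and there is no evident mechanism to deduce it from a pointwise Jacobian computation. You correctly flag this step as the main obstacle, but the dominance criterion you propose is not merely uncomputed --- it is insufficient in principle.
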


\textbf{Acknowledgements}: The authors thank Carlos Am\'endola and Bernd Sturmfels for helpful discussions. This material is based upon work supported by the National Science Foundation Graduate Research Fellowship under Grants No. DGE 1752814 and DGE~2146752.

\bibliographystyle{plain}

\end{document}